\documentclass[reqno,11pt]{amsart}

\usepackage{amssymb,graphics,graphicx,wrapfig}


\def\eps{\varepsilon}


\def\be{\begin{equation}}
\def\ee{\end{equation}}
\def\ba{\begin{align}}
\def\bm{\begin{multline}}
\def\bfig{\begin{figure}[htb]}
\def\efig{\end{figure}}

\setlength{\oddsidemargin}{0mm}
\setlength{\evensidemargin}{0mm}
\setlength{\textwidth}{150mm}
\setlength{\topmargin}{0mm}
\setlength{\textheight}{220mm}
\setcounter{secnumdepth}{2}
\numberwithin{equation}{section}
\newtheorem{theorem}{Theorem}[section]
\newtheorem{proposition}[theorem]{Proposition}
\newtheorem{lemma}[theorem]{Lemma}



\DeclareMathSymbol{\leqslant}{\mathalpha}{AMSa}{"36}
\DeclareMathSymbol{\geqslant}{\mathalpha}{AMSa}{"3E}
\DeclareMathSymbol{\doteqdot}{\mathalpha}{AMSa}{"2B}
\DeclareMathSymbol{\circlearrowright}{\mathalpha}{AMSa}{"08}
\DeclareMathSymbol{\subsetneq}{\mathalpha}{AMSb}{"28}
\DeclareMathSymbol{\supsetneq}{\mathalpha}{AMSb}{"29}
\renewcommand{\leq}{\;\leqslant\;}
\renewcommand{\geq}{\;\geqslant\;}

\newcommand{\dd}{{\rm d}}
\newcommand{\e}[1]{\,{\rm e}^{#1}\,}

\newcommand{\upchi}{\raise 2pt \hbox{$\chi$}}

\makeatletter
\makeatother
\def\writefig#1 #2 #3 {\rlap{\kern #1 truecm \raise #2 truecm
\hbox{#3}}}



\newcommand{\caS}{{\mathcal S}}

\newcommand{\bbE}{{\mathbb E}}

\newcommand{\bbN}{{\mathbb N}}

\newcommand{\bbP}{{\mathbb P}}

\newcommand{\bbR}{{\mathbb R}}

\newcommand{\bbZ}{{\mathbb Z}}

\newcommand{\bsv}{{\boldsymbol v}}

\newcommand{\bsx}{{\boldsymbol x}}


\begin{document}


\title{Random permutations of a regular lattice}

\author{Volker Betz}
\address{Volker Betz \hfill\newline
\indent Department of Mathematics, \hfill\newline 
\indent University of Warwick \hfill\newline
\indent and \hfill\newline
\indent FB Mathematik, TU Darmstadt \hfill\newline
{\small\rm\indent http://www.mathematik.tu-darmstadt.de/$\sim$betz/} 
}
\email{betz@mathematik.tu-darmstadt.de}

\maketitle

\begin{quote}
{\small
{\bf Abstract.}
Spatial random permutations were originally studied due to their connections to 
Bose-Einstein condensation, but they possess many interesting properties of their own. 
For random permutations of a regular lattice with periodic boundary conditions, 
we prove existence of the infinite volume limit under fairly weak assumptions. 
When the dimension of the lattice is two, we give numerical evidence of a 
Kosterlitz-Thouless transition, and of long cycles having an almost sure fractal dimension 
in the scaling limit. Finally we comment on possible connections 
to Schramm-L\"owner curves. 
}  

\vspace{1mm}
\noindent
{\footnotesize {\it Keywords:} Random permutations, infinite volume limit, Kosterlitz-Thouless transition,
fractal dimension, Schramm-L\"owner evolution}

\vspace{1mm}
\noindent
{\footnotesize {\it 2000 Math.\ Subj.\ Class.:} 28A80, 60K35, 60D05, 82B26, 82B80}
\end{quote}

\vspace{3mm}
\centerline{ \em Dedicated to Herbert Spohn on the occasion of his 65th birthday} 
\vspace{3mm}


\section{Introduction}

Spatial random permutations (SRP) are a simple but intriguing probabilistic
model with a distinct statistical mechanics flavor. They are easy to 
describe and to visualize. Pick a set $X_N$ of $N$ points in $\bbR^d$, 
which in the present paper will always be a subset of a regular lattice. 
A SRP measure is a probability measure 
on the set of bijective maps (permutations) $X_N \to X_N$ that respects the 
location of the points $x \in X_N$. The simplest version, and the one we will
exclusively treat here, is the independent jump penalization. We fix $\alpha > 0$ and a
jump energy function $\xi : \bbR^d \to \bbR$. To fix the ideas, we recommend to always think of the 
physically most relevant case $\xi(x) = |x|^2$. The vector $\pi(x)-x$ will be called the 
{\em jump} originating in $x$ of the permutation $\pi$. We assign a Boltzmann type weight 
$\e{-\alpha \xi(\pi(x)-x)}$ to each jump. 
The weight of $\pi$ is the product of all its jump weights; normalizing the weights so that their sum 
equals one yields the probability measure.  
Precise formulae and more discussion
will be given in Section \ref{basics}. A convenient visualization of a typical SRP 
can be found in Figure \ref{fig1}. 

We will always assume that $\xi$ is bounded below and that $\xi(x) \to + \infty$ 
as $|x| \to \infty$. Let us for now that assume in addition that $\xi$ is minimal 
at $x=0$. Then, jumps of typical SRP are short, and the  
parameter $\alpha$ determines the balance between the energy 
$\sum_{x \in X} \xi(\pi(x)-x)$ which is minimized for the identity permutation, 
and the entropy which grows with the number of jump targets available to 
each $x$. We are mainly interested in properties of the infinite volume limit $N \to \infty$, 
which is a measure on permutations of an infinite regular lattice $X$. 
A first question is whether such an infinite 
volume limit exists. For a regular lattice with periodic boundary conditions, we give an 
affirmative answer 
under mild conditions on $\xi$  in Theorem \ref{existence}. 

\bfig
\begin{center}
\includegraphics[width= 0.5\textwidth]{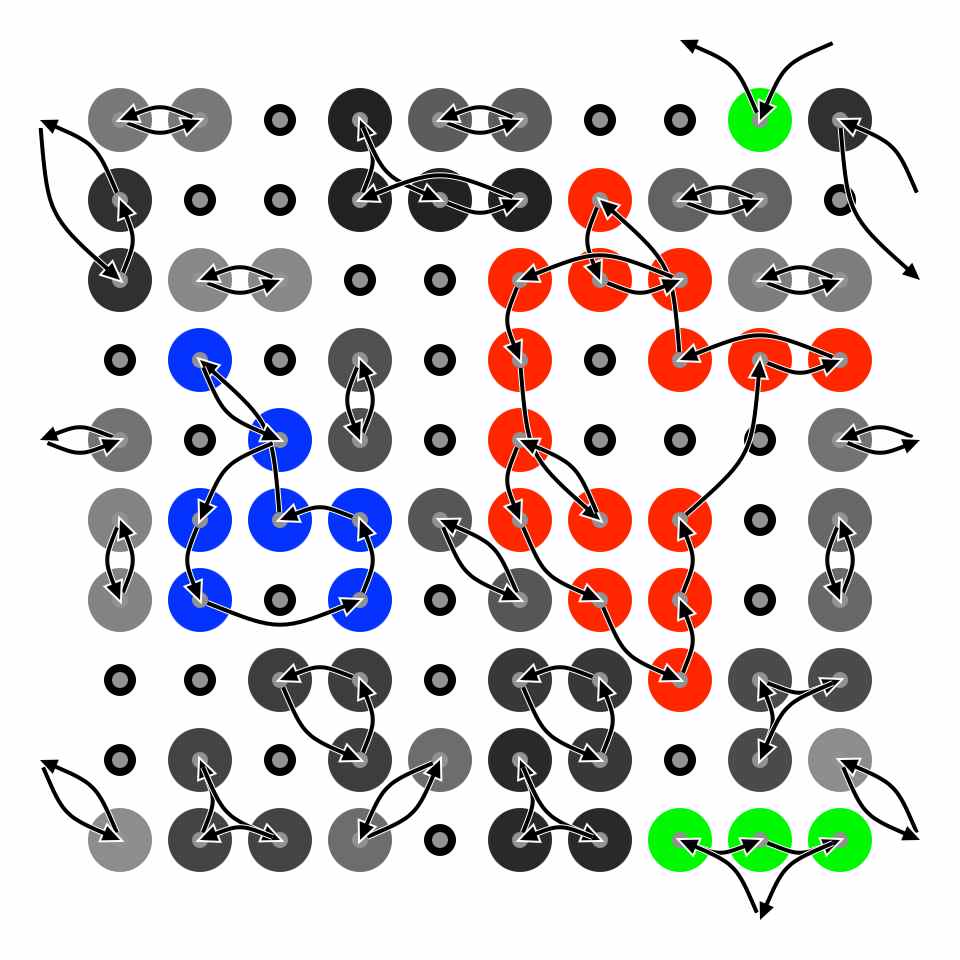} 
\end{center}
\caption{Image of a spatial random permutation on a 
square grid of $100$ points with periodic 
boundary conditions. An arrow is drawn from each point $x \in X$ to its image 
$\pi(x)$. Points in the same cycle are of the same color. 
Fixed points of $\pi$ have no arrow. 
}
\label{fig1}
\efig

The most interesting properties of spatial random permutations are properties of their cycles. 
For $x \in X$, write $C_x(\pi) = \{ \pi^j(x): j \in \bbZ \}$ for the (finite or infinite) 
cycle that contains $x$.
What is the expected size of $C_x(\pi)$?  
When $\alpha$ is very large, the overwhelming majority 
of points will be fixed points of a typical SRP $\pi$, 
and the cycles will be very short. 
When $\alpha$ is small, long jumps are 
only weakly penalized, and we can expect cycles to be long or even infinite. 
This raises the question whether there is a phase transition, i.e.\ a 
value $\alpha_c>0$ such that for a given point $x \in X$, 
$\bbP(|C_x(\pi)| < \infty) > 0$ if and only if $\alpha < \alpha_c$. 
For lattice permutations, there is no rigorous answer, 
but \cite{GRU} contains detailed numerical investigations. There it is found that for $\xi(x) = |x|^2$, 
a phase transition exists if and only if the dimension of the lattice is strictly greater than two.

Absence of a phase transition in two dimensions is known in systems with a 
continuous symmetry, such as the XY-model, where by 
the Mermin-Wagner theorem the continuous symmetry cannot be broken \cite{MW66}. 
Instead, a Kosterlitz-Thouless phase transition occurs, 
where the decay of correlations changes from exponential to algebraic. 
There is no obvious continuous symmetry present in spatial random permutations.
Nevertheless, in Section \ref{section KT} we give numerical 
evidence of a Kosterlitz-Thouless phase transition for SRP of a planar lattice.
We also discuss, on a heuristic level, similarities and differences between SRP
and the plane rotor model. 
In the physics literature, a connection is made between the XY-model and two-dimensional 
superfluids \cite{NK77}. By the connection of SRP to bosonic many-particle systems (see below),
a link of planar SRP and the XY model is somewhat plausible. The precise nature of this link is unclear. 

A natural step beyond studying the length distribution of SRP cycles is to investigate their geometry
in the scaling limit. In three dimensions, numerical studies \cite{GLU12} show that the points belonging
to long cycles are uniformly distributed in space, and the same is expected to hold for higher 
dimensions. Thus only the case of two dimensions can give rise to a nontrivial geometry. On the other
hand, by our results of Section \ref{section KT}, for any given point in space,
the cycle containing that point will be finite with probability one after taking the infinite
volume limit, and thus shrink to a point in the subsequent scaling limit. We therefore have to 
condition on the event that the origin is in a cycle whose length diverges in the infinite 
volume limit. Numerically, this can be approximately achieved by either studying the longest 
cycle in a large finite volume, or by forcing a cycle through the system by the 
boundary conditions. The latter is much more practical for large $\alpha$. 
We find strong numerical evidence that in the scaling limit, 
for $\alpha$ smaller than the critical value 
$\alpha_c$ at which the Kosterlitz-Thouless transition happens, 
long cycles have an almost sure fractal dimension, 
the latter depending linearly on $\alpha$. 
For $\alpha > \alpha_c$, our numerical procedures still suggest an 
almost sure fractal dimension, this 
time with a power law dependence on $\alpha$. However, due to strong metastability
effects and since
we are conditioning on very rare events, 
our numerics become rather unreliable there.
At the end of Section 6, we include a short discussion about
possible interpretations of our results for large $\alpha$. 

In the final section of this paper, we speculate about 
the question whether the scaling 
limit of long SRP cycles in two dimensions has any connection to the Schramm-L\"owner curves
\cite{Law05}. We give two arguments in favor of this connection and one possible objection. 
Ultimately, the question remains unanswered and will require further investigation. 

The present paper treats spatial random permutations 
from the point of view of classical statistical mechanics. 
An important connection that we do not treat here is to the mathematics of the free and the interacting 
Bose gas. Indeed, this is the context which motivated the model originally. 
In \cite{Fey}, Feynman argued that in a model somewhat  
related to ours, the occurrence of infinite cycles signifies  
the onset of Bose-Einstein condensation in the interacting Bose gas. No significant mathematical 
progress has been made on these ideas for interacting Bosons, 
but for the free Bose gas, S\"ut\H o \cite{Suto1,Suto2} put Feynmans ideas onto firm mathematical ground. 
In \cite{BU1}, S\"ut\H o's ideas were generalized and put into a probabilistic context. 
This leads to the annealed model of SRP, where the points $x$ are not confined on a lattice, and 
an averaging over all possible positions of the points $x$ in the finite volume takes place when 
computing the weight of a given permutation. In that model, and some generalizations of it \cite{BU2, BU4}, 
a phase transition of the type observed in \cite{GRU} is proved under suitable conditions.
For most reasonable choices of $\xi$, these conditions imply that the space dimension  
is three or greater. 

It would be very desirable to establish this kind of result also for lattice SRP, 
but the methods used  in \cite{BU2, BU4} 
make critical use of the averaging over the points $x$ and 
cannot be adapted to the lattice situation. Indeed, apart from the results 
presented in Section \ref{ivl} below, all that has been proved about lattice
SRP can be found in \cite{BU1} and \cite{BiRi_unplublished}. The relevant sections
of the former reference establish a criterion for the existence of the 
infinite volume limit, which we will use below, and prove the absence of infinite 
cycles for sufficiently large values of $\alpha$. 
The latter work gives a rather complete description of the
set of all infinite volume SRP measures, but only in the case of the one-dimensional,
not necessarily regular, lattice. Nothing at all is rigorously known 
about cycle properties or phase transitions for 
higher dimensional SRP models.
It is my hope that some of the numerical results presented below will 
convince the reader that it is worthwhile putting some energy into changing 
this situation. 

\medskip
\noindent
{\bf Acknowledgments.}
I wish to thank Daniel Ueltschi for introducing me to the topic of SRP, 
for many good discussions, and for useful comments on the present paper; 
Thomas Richthammer for letting me have the manuscript
\cite{BiRi_unplublished} prior to its publication; 
and Alan Hammond for useful discussions on SLE. 
Finally I wish to thank one of the anonymous referees of this paper for his/her 
detailed and insightful comments and suggestions.

\section{Spatial random permutations on a periodic regular lattice} \label{basics}

Here we give the precise definitions of our model. Later we will focus on 
two-dimensional SRP, but in the present and the next section the space dimension is arbitrary. 

Let $X \subset \bbR^d$ be a regular lattice, i.e.\ pick a basis $v_i$, $1 \leq i \leq d$ of $\bbR^d$ and 
let $X = \{ \sum_{i=1}^d z_i v_i: z_i \in \bbZ \}$. First we consider finite subsets of $X$. Usually 
(e.g.\ in \cite{GRU, GLU12}), this is just done by intersecting $X$ with a finite 
cube $\Lambda$. In our case, this approach works for a cubic lattice, but we need periodic boundary 
conditions and thus use a more specific choice of finite volume approximation 
for general lattices. 

For $L \in \bbN$, $X_L$ denotes the subset of $X$ with side length 
$L$ and periodic boundary conditions, centered around $0$. 
Explicitly, $X_L = \{ \sum_{i=1}^d z_i v_i: z_i \in \bbZ_L \}$, where 
$\bbZ_L = \bbZ \cap [-L/2, L/2)$. For $z,w \in \bbZ_L$, the periodic difference 
\[
(z-w)_{\rm per} = \begin{cases} 
z-w & \text{if } z-w \in \bbZ_L \\
z-w+L & \text{if } z-w \in [-L,-L/2), \\
z-w-L & \text{if } z-w \in [L/2,L).
\end{cases}
\]
is again an element of $\bbZ_L$, and thus for $x = \sum z_i v_i$ and $y = \sum w_i v_i$, 
\[
(x-y)_{\rm per} = \sum_{i=1}^d (z_i-w_i)_{\rm per} v_i
\]
is an element of $X_L$. We will always use the periodic difference on $X_L$, and will simply write $x-y$ 
instead of $(x-y)_{\rm per}$. We will also consistently write $N = N(L) = |X_L|$ to denote the number of 
points in $X_L$. 

Let $\caS(X_L)$ denote the set of permutations on $X_L$, and $\caS(X)$ the set of permutations on $X$. A 
{\em jump energy} is a function $\xi: X \to \bbR$ which is bounded from below. 
The measure of spatial random permutations on the torus $X_L$
with jump energy $\xi$ and parameter $\alpha$
is the probability measure $\bbP_N$ on $\caS(X_L)$ such that the probability of a permutation 
$\pi: X_L \to X_L$ is given by a Boltzmann weight with total energy 
\be \label{energy}
H_N(\pi) = \sum_{x \in X_L} \xi (\pi(x) - x).
\ee
Explicitly, we have  
\be \label{measure}
\bbP_N (\pi) = \frac{1}{Z_N} \exp \big( - \alpha H_N(\pi) \big),
\ee
where $Z_N$ is the normalization. 
We call the vector $\pi(x)-x$ the {\em jump} starting from $x$, and identify 
$\bbP_N$ with the measure on 
$\caS(X)$ where jumps starting from outside of $X_L$ have length zero. 

Usually $\xi$ grows at infinity, so that it is energetically favorable when a permutation $\pi$ has 
few large jumps. While in the later sections we will almost exclusively 
deal with the cases $X = \bbZ^2$ and $\xi(x) = |x|^2$, here and in Section \ref{ivl} 
we allow $\xi$ to be more general and in particular non-symmetric and non-convex. 

Equation \eqref{measure} is  reminiscent of a finite volume Gibbs measure. In that context, 
it would be natural to call $\alpha$ the inverse temperature and denote it by $\beta$ as usual. 
The problem with
this is that in its annealed version, the model of spatial random permutations with 
$\xi(x) = |x|^2$ is closely connected with 
Bose-Einstein condensation, see \cite{BU1}.  
But then, $\beta$ would be proportional to the {\em temperature} of the Bose gas.  
The reason is that the integral 
kernel of $\e{- \beta \Delta}$ is given by $\frac{1}{(4 \beta)^{d/2}} \e{-\frac{1}{4 \beta} |x-y|^2}$. 
This regularly leads to a lot of confusion, which is why we avoid using the term 'inverse temperature'. 

There are further problems 
when trying to view the measure \eqref{measure} as a finite volume Gibbs measure. 
An obvious attempt is to regard it as a system of $X$-valued spins, where the jumps 
$\eta(x) = \pi(x) - x$ take the role of the spins. This system only has a 
single site potential given by $\xi$, but the requirement that $x \mapsto \pi(x) = \eta(x) + x$ 
must be bijective on $X$ imposes a hard core condition of infinite range. 
This excludes the use of almost all the classical theory of spin systems or Gibbs measures. 
One consequence is that proving the existence of an infinite volume measure is 
more difficult than one might first think.

\section{Infinite volume limit} \label{ivl}

Here we prove the existence of a measure of spatial random permutations 
on an infinite regular lattice. Since the set $\caS(X)$ of permutations of a countable set is 
uncountable, we first need to 
define a suitable sigma-algebra on $\caS(X)$. This, along with an abstract
criterion for the existence of an infinite volume measure, has been done in \cite{BU1}. We 
review the construction and the result here.  

For $x,y \in X$, we write  
\[
B_{x,y} = \{ \pi \in \caS(X): \pi(x) = y \}.
\]
for the cylinder set of permutations sending $x$ to $y$. 
By a Cantor diagonal argument, the sequences $(\bbP_{N(L)}(B_{x,y}))_{L \in \bbN}$
converge along a joint subsequence for all 
$x,y \in X$, and this implies the existence of a limiting finitely additive set function $\mu$ 
on the semi-ring $\Sigma'$ generated by the cylinder sets.
If $\mu$ extends to a probability measure $\bbP$ on $\caS(X)$ (equipped with the sigma 
algebra generated by the cylinder sets), we will say that $\bbP$ is a measure of spatial random 
permutations on $X$ with jump energy $\xi$ and parameter $\alpha$. 
In Theorem 3.2 of \cite{BU1} it is shown that 
a necessary and sufficient condition for $\mu$ to extend to such a probability measure is that for all fixed 
$x \in X$, 
\be \label{tight}
\sum_{y \in X} \mu(B_{x,y}) = \sum_{y \in X} \mu(B_{y,x}) = 1.
\ee
For finite $A \subset X$, $\bigcup_{y \in A} B_{x,y}$ and $\bigcup_{y \in A} B_{y,x}$ are elements of 
$\Sigma'$. By finite additivity, and since the cylinder sets are mutually disjoint, \eqref{tight} follows
if for all $\delta > 0$ we can find a finite $A \subset X$ so that 
\be \label{tight2}
\lim_{L \to \infty} \bbP_N \Big( \bigcup_{y \in A} B_{x,y} \Big) \geq 1-\delta, \qquad 
\lim_{L \to \infty} \bbP_N \Big( \bigcup_{y \in A} B_{y,x} \Big) \geq 1-\delta
\ee
for the converging subsequence. In words, both the image and the pre-image of each point $x$ need to 
stay within a finite set with high probability, uniformly in $N$. For a system of independent $X$-valued 
spins with single site potential $\xi$, the tightness criterion \eqref{tight2} would hold  
as soon as $x \mapsto \e{-\xi(x)}$ is summable over $x$. The following theorem shows that for 
SRP on a periodic lattice, the condition is only marginally stronger. 

\begin{theorem} \label{existence}
Let $X_L$ be a periodic regular lattice of 
side length $L$, and  define $\bbP_N = \bbP_{N(L)}$ as in \eqref{measure}. For 
$\alpha >0$, assume that there exists $\eps > 0$ such that 
\be \label{existence condition}
\sum_{x \in X} \e{- (\alpha - \eps) \xi(x)} < \infty.
\ee
Then for all $x \in X$ and all $\delta > 0$, there exists $D>0$ such that 
\[
\sup_{L \in \bbN} \bbP_N (|\pi(x) - x| > D) < \delta, 
\qquad \sup_{L \in \bbN} \bbP_N (|\pi^{-1}(x) - x| > D) < \delta.
\]
In particular, an infinite volume measure of spatial random permutations with jump energy 
$\xi$ and parameter $\alpha$ exists. 
\end{theorem}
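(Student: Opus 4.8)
\medskip
\noindent\emph{Proposed proof.}
The plan is to reduce the whole statement to one estimate that is insensitive to the cycle structure: that the mean energy per site of $\bbP_N$ is bounded uniformly in $L$,
\[
\sup_{L}\;\frac1N\,\bbE_N[H_N]\;\leq\;\bar H\;:=\;\frac1\eps\Big(\log\!\sum_{z\in X}e^{-(\alpha-\eps)\xi(z)}+\alpha\,\xi(0)\Big)\;<\;\infty .
\]
Granting this, fix $D>0$ and set $m:=\inf_{X}\xi$ (finite, since $\xi$ is bounded below) and $M_D:=\inf\{\xi(z):z\in X,\ |z|>D\}$; as $\xi\to+\infty$ at infinity, $M_D\to+\infty$, so $M_D>m$ for $D$ large. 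For $\pi\in\caS(X_L)$, with $J_D(\pi):=\#\{x\in X_L:|\pi(x)-x|>D\}$, splitting the sum defining $H_N(\pi)$ according to whether a jump is long gives $H_N(\pi)\geq N m+J_D(\pi)(M_D-m)$, hence $J_D(\pi)\leq(H_N(\pi)-Nm)/(M_D-m)$. Since $H_N$, and therefore $\bbP_N$, is invariant under torus translations, $\bbE_N[J_D]=N\,\bbP_N(|\pi(x)-x|>D)$ for every fixed $x\in X_L$, whence
\[
\bbP_N\big(|\pi(x)-x|>D\big)\;\leq\;\frac{\frac1N\bbE_N[H_N]-m}{M_D-m}\;\leq\;\frac{\bar H-m}{M_D-m}\;\longrightarrow\;0\quad(D\to\infty),
\]
uniformly in $L$ (and trivially for $x\notin X_L$). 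The corresponding bound for $\pi^{-1}$ needs nothing new: the image of $\bbP_N$ under $\pi\mapsto\pi^{-1}$ is the spatial random permutation measure on $X_L$ with jump energy $\tilde\xi(z):=\xi(-z)$, which satisfies the same hypotheses (in particular $\sum_x e^{-(\alpha-\eps)\tilde\xi(x)}<\infty$), so the estimate above applied to $\tilde\xi$ governs $\bbP_N(|\pi^{-1}(x)-x|>D)$. Choosing $D$ with $(\bar H-m)/(M_D-m)<\delta$ and $A=\{y\in X:|y-x|\leq D\}$, both inequalities in \eqref{tight2} hold along the convergent subsequence, so by the criterion recalled above (Theorem~3.2 of \cite{BU1}) the finitely additive set function $\mu$ extends to a probability measure and the infinite volume limit exists.

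\medskip
The only real work is the energy bound. Since $X_L$ is finite, $Z_N^{(\beta)}:=\sum_{\pi\in\caS(X_L)}e^{-\beta H_N(\pi)}$ (so $Z_N=Z_N^{(\alpha)}$) is smooth in $\beta$, and $f_N(\beta):=\frac1N\log Z_N^{(\beta)}$ satisfies $f_N'(\beta)=-\frac1N\bbE_N^{(\beta)}[H_N]$ and $f_N''(\beta)=\frac1N\mathrm{Var}_{\bbP_N^{(\beta)}}(H_N)\geq 0$, so $f_N$ is convex. (Necessarily $0<\eps<\alpha$, since otherwise the sum in \eqref{existence condition} diverges.) Convexity gives the one-sided bound $\frac1N\bbE_N[H_N]=-f_N'(\alpha)\leq\bigl(f_N(\alpha-\eps)-f_N(\alpha)\bigr)/\eps$, and it remains to estimate the two free energies crudely. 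Keeping only the identity permutation, $Z_N^{(\alpha)}\geq e^{-\alpha N\xi(0)}$, so $f_N(\alpha)\geq-\alpha\xi(0)$. On the other hand, since $\caS(X_L)$ is contained in the set of all maps $X_L\to X_L$ and all summands are nonnegative,
\[
Z_N^{(\alpha-\eps)}\;\leq\;\prod_{x\in X_L}\ \sum_{y\in X_L}e^{-(\alpha-\eps)\xi(y-x)}\;=\;\Big(\sum_{z\in X_L}e^{-(\alpha-\eps)\xi(z)}\Big)^{\!N}\;\leq\;\Big(\sum_{z\in X}e^{-(\alpha-\eps)\xi(z)}\Big)^{\!N},
\]
so $f_N(\alpha-\eps)\leq\log\sum_{z\in X}e^{-(\alpha-\eps)\xi(z)}<\infty$ by \eqref{existence condition}. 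Combining the two estimates yields $\frac1N\bbE_N[H_N]\leq\bar H$. This is exactly where the slack $\eps$ is essential: one needs the free energy to be finite, uniformly in $L$, at some parameter $\beta=\alpha-\eps$ strictly below $\alpha$ in order to control the left derivative of $f_N$ at $\alpha$.

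\medskip
\noindent\emph{Where the difficulty lies.} The tempting first attempt --- a surgery that removes $x$ from its cycle, or a cluster expansion of $Z_N$ over cycle partitions --- runs into the ratio of the partition function with the support $A$ of $x$'s cycle deleted to $Z_N$, for which the only elementary bound is of order $e^{\alpha|A|\xi(0)}$; this is hopelessly weak once that cycle is long, and long (even infinite) cycles are genuinely present for small $\alpha$. The virtue of the route above is that it never touches the cycle structure at all, at the cost only of the mild strengthening \eqref{existence condition} of summability of $e^{-\alpha\xi}$.
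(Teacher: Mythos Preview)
Your argument is correct, and it is a genuinely different route from the paper's. The paper proceeds through the cycle structure: it first shows (Lemma~\ref{equal probabilities}) that the law of the $m$-th jump along a cycle does not depend on $m$, then (Lemma~\ref{prob is exp}) that $\bbP_N(|\pi(x)-x|>D)$ equals the expected \emph{fraction} $R_{x,D}$ of long jumps in the cycle through $x$, and finally controls $\bbE_N(R_{x,D};R_{x,D}>\delta)$ by a combinatorial expansion over cycle length $k$ and number $j\geq\delta k$ of long jumps, bounding the resulting binomial sum by a geometric series once $\eps_D$ is small enough. You bypass all of this: the free-energy convexity inequality $\tfrac1N\bbE_N[H_N]\leq(f_N(\alpha-\eps)-f_N(\alpha))/\eps$ together with the two trivial partition-function bounds is the entire content. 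Both proofs consume the hypothesis~\eqref{existence condition} in the same way --- to get a finite quantity at parameter $\alpha-\eps$ --- but your route is shorter, yields the explicit bound $(\bar H-m)/(M_D-m)$, and (as you note in your closing paragraph) never has to confront the possibility of long cycles at all. The paper's route, by contrast, produces the auxiliary Lemmas~\ref{equal probabilities} and~\ref{prob is exp} along the way; these are not reused elsewhere in the paper, so there is no loss in avoiding them.
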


Condition \eqref{existence condition} implies that $\lim_{|x| \to \infty} \xi(x) = \infty$. It does not 
imply that the minimum of $\xi$ is at $x=0$. 
When $\xi$ grows faster than logarithmically at infinity, 
\eqref{existence condition} can be met for all $\alpha > 0$, and thus an infinite volume limit exists 
for all $\alpha$ in these cases. 

We prepare the proof by giving a couple of results that exploit the translation invariance. 

\begin{lemma} \label{equal probabilities}
Under the assumptions of Theorem \ref{existence}, for all $x,k \in X_L$ and all $m \in \bbN$ we have
\[
\bbP_N (\pi(x) - x = k) = \bbP_N(\pi^{m}(x) - \pi^{m-1}(x) = k).
\]
\end{lemma}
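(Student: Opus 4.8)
The plan is to use two soft inputs and no estimates: the invariance of $\bbP_N$ under lattice translations, which forces \emph{both} sides of the claimed identity to be independent of the base point $x$, and a reindexing of a point count by the bijection $\pi^{m-1}$, which then identifies the two resulting base-point-free quantities. One should bear in mind throughout that everything lives on the torus $X_L$, so ``difference'' is periodic difference and $X_L$ is a finite abelian group (isomorphic to $(\bbZ/L\bbZ)^d$ via the basis $v_1,\dots,v_d$).

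First I would establish translation invariance. For $a\in X_L$ let $\tau_a\colon X_L\to X_L$, $\tau_a(y)=y+a$, and let $\Phi_a\colon\pi\mapsto\tau_a\circ\pi\circ\tau_a^{-1}$ be the corresponding conjugation on $\caS(X_L)$. Since $(\tau_a\circ\pi\circ\tau_a^{-1})(y)-y=\pi(y-a)-(y-a)$ in the group $X_L$, the substitution $y\mapsto y+a$ gives $H_N(\Phi_a(\pi))=H_N(\pi)$, so $\Phi_a$ preserves $\e{-\alpha H_N}$ and hence $\bbP_N$. Because $\Phi_a(\pi)^{j}=\tau_a\circ\pi^{j}\circ\tau_a^{-1}$, one has $\Phi_a(\pi)^{j}(x+a)=\pi^{j}(x)+a$ for all $j\ge 0$, and therefore $\Phi_a(\pi)^{m}(x+a)-\Phi_a(\pi)^{m-1}(x+a)=\pi^{m}(x)-\pi^{m-1}(x)$. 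Thus $\Phi_a$ maps $\{\pi^{m}(x)-\pi^{m-1}(x)=k\}$ onto $\{\sigma^{m}(x+a)-\sigma^{m-1}(x+a)=k\}$, and being $\bbP_N$-preserving it shows that $q_N^{(m)}(k):=\bbP_N(\pi^{m}(x)-\pi^{m-1}(x)=k)$ does not depend on $x$; the case $m=1$ gives in particular that $\bbP_N(\pi(x)-x=k)=q_N^{(1)}(k)$ is also $x$-independent.

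Then I would sum over $X_L$ and reindex. With $N=|X_L|$,
\[
N\,q_N^{(m)}(k)=\sum_{x\in X_L}\bbP_N\bigl(\pi^{m}(x)-\pi^{m-1}(x)=k\bigr)=\bbE_N\Bigl[\,\#\{x\in X_L:\pi(\pi^{m-1}(x))-\pi^{m-1}(x)=k\}\,\Bigr].
\]
For each fixed $\pi$, the substitution $z=\pi^{m-1}(x)$ is a bijection of $X_L$, so the count in brackets equals $\#\{z\in X_L:\pi(z)-z=k\}$; pulling the expectation back through this identity yields $N\,q_N^{(m)}(k)=\sum_{z\in X_L}\bbP_N(\pi(z)-z=k)=N\,q_N^{(1)}(k)$, i.e.\ $q_N^{(m)}(k)=q_N^{(1)}(k)$, which is the assertion.

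I expect the only real ``obstacle'' to be the temptation to look for an explicit measure-preserving bijection on $\caS(X_L)$ that directly slides a jump at $x$ into the $m$-th position along the cycle through $x$: such surgeries change jump lengths and hence $H_N$, and they are awkward to control. The efficient route is the indirect one above, where translation invariance removes the base point and averaging over it converts ``the $m$-th step along the orbit of $x$'' into ``a step out of the point $z=\pi^{m-1}(x)$'', after which the bijectivity of $\pi^{m-1}$ does all the work.
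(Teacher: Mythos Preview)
Your proof is correct. Both you and the paper rely on the same translation invariance of $\bbP_N$ under the conjugation $\pi\mapsto\tau_a\pi\tau_a^{-1}$, but you use it differently. The paper proceeds by induction on $m$: it decomposes $\bbP_N(\pi^{m}(x)-\pi^{m-1}(x)=k)$ according to the value of $\pi^{-1}(x)$, applies the translation that sends $\pi^{-1}(x)$ to $x$, and thereby converts the $m$-th step into the $(m{+}1)$-th step. You instead use translation invariance only to show that both sides are independent of the base point $x$, and then average over $x$; the resulting expected point count is handled in one stroke by the bijection $z=\pi^{m-1}(x)$, with no induction needed. Your route is a little more direct and makes transparent why the statement holds for all $m$ simultaneously; the paper's inductive conditioning argument, on the other hand, stays closer to the probabilistic picture of ``sliding one step along the cycle'' and would adapt more readily if one wanted joint statements about consecutive increments. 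Either way, the content is the same: translation invariance plus the fact that $\pi^{m-1}$ permutes $X_L$.
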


\begin{proof}
For $m=1$ there is nothing to prove. Let us now assume the claim holds for $m \in \bbN$. For 
$y \in X_L$, write $\sigma_y$ for the permutation of $X_L$ with $\sigma_y(x) = x+y$; the sum is understood with 
the periodic boundary conditions explained above. Define the map $T_y$ on $\caS(X_L)$ by 
$T_y \pi = \sigma_y \pi \sigma_{-y}$. Since
$\sigma_y^{-1} = \sigma_{-y}$ we have $T_y \pi (x)  = \pi(x-y) + y$, i.e. $T_y \pi$ 
is the permutation where all jumps of $\pi$ are shifted by $y$. Thus $T_y$ is a bijection on 
$\caS(X_L)$ with $T_y^{-1} = T_{-y}$, and $H_N(T_y \pi) = H_N(\pi)$. This implies 
\[
\bbE_N(F \circ T_y) = \frac{1}{Z_N} \sum_{\pi \in \caS_N} \e{-\beta H(T_{y} \pi)} F(T_y \pi) 
= \bbE_N(F)
\]
for all bounded random variables $F$. We apply this to 
$F(\pi) = 1_{\{\pi^m(x) - \pi^{m-1}(x) = k\} } 1_{\{ \pi^{-1}(x)=y \} }$ and use 
the fact that $(T_y \pi)^{m} = T_y \pi^{m}$ for all $m \in \bbZ$.  
We thus find 
\[
\begin{split}
\bbP_N(\pi^{m}(x) - \pi^{m-1}(x) = k) & = \sum_{y \in X_L} \bbP_N \Big( \pi^{m}(x)-\pi^{m-1}(x) = k, \pi^{-1}(x) = x+y \Big) \\
& = \sum_{y \in X_L} \bbP_N \Big( T_y \pi^{m} (x) - T_y \pi^{m-1}(x) = k, T_y \pi^{-1}(x) = x+y \Big) \\
& = \sum_{y \in X_L} \bbP_N \Big( \pi^m(x-y) - \pi^{m-1}(x-y) = k, \pi^{-1}(x-y) = x \Big) \\
& = \sum_{y \in X_L} \bbP_N \Big( \pi^{m}(\pi(x)) - \pi^{m-1}(\pi(x)) = k, \pi(x) = x-y \Big) = \\
& = \bbP_N(\pi^{m+1}(x) - \pi^{m}(x) = k) .
\end{split}
\]
\end{proof}

For the next statement, let $C_x(\pi) = \{ \pi^n(x): n \leq N \}$ 
denote the orbit of $x$ under $\pi$, i.e.\ the cycle containing $x$. We write  
\[
R_{x,D}(\pi) = \frac{1}{|C_x(\pi)|} \Big | \{ y \in C_x(\pi): |\pi(y)-y| > D \} \Big|
\]
for the fraction of jumps of length greater than $D$ in the cycle containing $x$. Above and 
in future, $|A|$ denotes the cardinality of a finite set $A$. 

\begin{lemma} \label{prob is exp} Under the assumptions of Theorem \ref{existence}, we have 
\[
\bbP_N( |\pi(x) - x| > D) = \bbE_N (R_{x,D}) \qquad \text{ for all } x \in X, D > 0.
\] 
\end{lemma}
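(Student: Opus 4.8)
\medskip
\noindent\textbf{Proof proposal.}
The plan is to rewrite the cycle average $R_{x,D}(\pi)$ as a Ces\`aro average of indicator functions along the orbit of $x$, and then to read off the result from Lemma \ref{equal probabilities}. Fix $x \in X$ and $D>0$, and recall that since $X_L$ is finite, the cycle $C_x(\pi)$ is finite; write $\ell = \ell(\pi) = |C_x(\pi)| \le N$. The points of the cycle are $x,\pi(x),\dots,\pi^{\ell-1}(x)$, all distinct, and since $\pi^{\ell}(x)=x$, as $y$ runs over $C_x(\pi)$ the jump $\pi(y)-y$ runs exactly over the vectors $\pi^m(x)-\pi^{m-1}(x)$, $1 \le m \le \ell$, each occurring once. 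Hence, abbreviating $A_m = A_m(\pi) = \{\,|\pi^m(x) - \pi^{m-1}(x)| > D\,\}$, we have
\[
R_{x,D}(\pi) \;=\; \frac{1}{\ell}\sum_{m=1}^{\ell} 1_{A_m}.
\]

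The first key observation is that $m \mapsto 1_{A_m}$ is periodic with period $\ell$, because $\pi^{m+\ell}(x) = \pi^m(x)$ for every $m$. Writing $M = q\ell + r$ with $0 \le r < \ell$ and using periodicity to evaluate $\sum_{m=1}^M 1_{A_m}$, one obtains the deterministic bound
\[
\Bigl|\,\frac{1}{M}\sum_{m=1}^{M} 1_{A_m} \;-\; R_{x,D}(\pi)\,\Bigr| \;\le\; \frac{\ell(\pi)}{M} \;\le\; \frac{N}{M}
\qquad\text{for all } M \in \bbN .
\]
Taking expectations, $\bbE_N\bigl[\tfrac1M\sum_{m=1}^M 1_{A_m}\bigr]$ thus differs from $\bbE_N(R_{x,D})$ by at most $N/M$.

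The second ingredient is Lemma \ref{equal probabilities}: summing the identity there over all $k \in X_L$ with $|k|>D$ gives $\bbP_N(A_m) = \bbP_N(A_1) = \bbP_N(|\pi(x)-x|>D)$ for every $m \in \bbN$. Hence, for every $M$,
\[
\bbE_N\Bigl[\,\frac{1}{M}\sum_{m=1}^{M} 1_{A_m}\,\Bigr] \;=\; \frac{1}{M}\sum_{m=1}^{M}\bbP_N(A_m) \;=\; \bbP_N\bigl(|\pi(x)-x|>D\bigr).
\]
Combining this with the bound from the previous paragraph and letting $M \to \infty$ yields $\bbE_N(R_{x,D}) = \bbP_N(|\pi(x)-x|>D)$, which is the claim.

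I do not expect a genuine obstacle here: the argument is short, and the only slightly delicate points are the bookkeeping that identifies the list of jumps in $C_x(\pi)$ with $\{\pi^m(x)-\pi^{m-1}(x)\}_{m=1}^{\ell}$, and the use of finiteness of $\ell$ --- which is precisely where it matters that we work on the finite torus $X_L$ rather than on $X$. If one prefers to avoid the limit in $M$, the same conclusion can be reached by conditioning on $\{|C_x(\pi)| = \ell\}$, at the cost of needing a refinement of Lemma \ref{equal probabilities} that also records the cycle length; the Ces\`aro route above sidesteps this.
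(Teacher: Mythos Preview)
Your proof is correct and follows essentially the same route as the paper: rewrite $R_{x,D}$ as an average of the indicators $1_{A_m}$ over a long, non-random range of $m$ and then apply Lemma~\ref{equal probabilities}. The only difference is cosmetic: instead of passing to the limit $M\to\infty$, the paper takes $M=\operatorname{lcm}(1,\ldots,N)$, so that $\ell(\pi)\mid M$ for every $\pi$ and the identity $R_{x,D}(\pi)=\tfrac{1}{M}\sum_{m=1}^{M}1_{A_m}$ holds exactly without any error term.
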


\begin{proof}
Since $\pi^{|C_x(\pi)|}(x)=x$, clearly 
\[
R_{x,D}(\pi) = \frac{1}{n |C_x(\pi)|} \sum_{j=1}^{n |C_x(\pi)|} 1_{\{|\pi^{j}(x) - \pi^{j-1}(x)| > D\}}
\]
for all $n \in \bbN$. By taking $M$ to be the smallest common multiple of $\{1, \ldots, N\}$
we can make the denominator and the number of terms non-random, and with 
Lemma \ref{equal probabilities}, we find
\[
\bbE_N(R_{x,D}) = \frac{1}{M} \sum_{j=1}^{M} \bbP_N( |\pi^{j}(x) - \pi^{j-1}(x)| > D ) = \bbP_N(|\pi(x)-x|>D).
\]
\end{proof}

\begin{proof}[Proof of Theorem \ref{existence}]
The idea is that for fixed $x \in X$, either long jumps are rare in $C_x(\pi)$, in which case 
they are unlikely to occur precisely at the particular $x$ under consideration;
or, long jumps are common, in which case a variant of the 'high temperature' 
argument given in the proof of 
Theorem 4.1 of \cite{BU1} will ensure that such permutations are unlikely. 

Fix $\delta > 0$. For each $D>0$, Lemma \ref{equal probabilities} and Lemma \ref{prob is exp} imply  
\be \label{the split}
\bbP_N(|\pi(x)-x| > D) =\bbP_N(|\pi^{-1}(x)-x| > D) = \bbE_N(R_{x,D}) 
\leq \delta + \bbE_N( R_{x,D}, R_{x,D} > \delta).
\ee
Above, we used the notation $\bbE(f, A)$ 
instead of $\bbE(f 1_A)$ for random variables $f$ and measurable sets $A$, 
and will continue to do so. 
We will show that under condition \eqref{existence condition}, there exists $D>0$ such that 
\be \label{toProve}
\sup_{L} \bbE_N \Big( \e{R_{x,D}}, R_{x,D} > \delta \Big) \leq \delta. 
\ee
Using the inequality $\e{x} > x$ we then find that $\bbE_N( R_{x,D}, R_{x,D} > \delta) \leq \delta$ 
uniformly in $N$, and the claim follows. 

Turning to the proof of \eqref{toProve}, we start by adding a constant to $\xi$ so that 
$\xi(0) = 0$. This does not change the measure $\bbP_N$, but 
$\xi_- = \min_{x \in X} \xi(x)$ may be strictly negative. By \eqref{existence condition}, for all $C>0$ 
we can find $D>0$ such that $\xi(x) > C$ whenever $|x| > D$. Therefore, 
\[
\begin{split}
R_{x,D} & = \frac{1}{|C_x(\pi)|} \sum_{j=1}^{|C_x(\pi)|} 1_{\{ |\pi^j(x) - \pi^{j-1}(x)| > D \} } \\
& \leq \frac{1}{C |C_x(\pi)|} \sum_{j=1}^{|C_x(\pi)|} \xi( \pi^j(x) - \pi^{j-1}(x) ) 
1_{\{ |\pi^j(x) - \pi^{j-1}(x)| > D \} } \\
& \leq \frac{1}{C |C_x(\pi)|} \sum_{j=1}^{|C_x(\pi)|} \Big( \xi( \pi^j(x) - \pi^{j-1}(x) ) - \xi_- \Big).
\end{split}
\]
Therefore, 
\be \label{eq1}
\bbE_N \Big( \e{R_{x,D}}, R_{x,D} > \delta \Big) \leq \sum_{k=2}^N 
\sum_{j= \lceil \delta k \rceil}^k \bbE_N \Big( \e{\tfrac{1}{kC} 
\sum_{y \in C_x(\pi)} \xi(\pi(y)-y) - \frac{\xi_-}{C}}, 
|C_x| = k, \, R_{x,D} = \frac{j}{k} \Big).
\ee
Let us write $M_{x,k,j}$ for the set of all vectors $(x_0,x_1, \ldots, x_{k-1}) \in (X_L)^k$ for which 
$x_0 = x$,  $x_i \neq x_j$ if $i \neq j$, and $|x_i - x_{i-1}| > D$ precisely $j$ times. 
For $\bsx \in M_{x,k,j}$, write 
\[
\caS_{N,\bsx} := \{ \pi \in \caS(X_L): |C_x(\pi)| = k, \pi^i(x) = x_i \text{ for all } 0 \leq i < k \}.
\]
With the convention $x_k = x_0 = x$, for fixed $k$ and $j$ the expectation 
on right hand side of \eqref{eq1} then equals 
\be \label{eq2}
\frac{1}{Z_N} \sum_{\bsx \in M_{x,k,j}} \e{- (\alpha - \tfrac{1}{kC}) \sum_{i=1}^k \xi(x_i - x_{i-1}) 
- \frac{\xi_-}{C}}
\sum_{\pi \in \caS_{N,\bsx}} \e{-\alpha \sum_{y \notin \{x_1, \ldots, x_k \} 
} \xi(\pi(y)-y)} = (\ast).
\ee

By our normalization $\xi(0)=0$, we have 
\[
\frac{1}{Z_N} \sum_{\pi \in \caS_{N,\bsx}} \e{-\alpha \sum_{y \notin \{x_1, \ldots, x_k \} 
} \xi(\pi(y)-y)} = \bbP_N \Big( \pi(y) = y \text{ for all } y \in \{x_1, \ldots, x_n \} \Big) \leq 1.
\]
Now we use \eqref{existence condition} and choose $\eps>0$ such that 
\[
Q := \sum_{x \in X} \e{- (\alpha - \eps) \xi(x)} < \infty.
\]
By taking $D$ large enough, we can make 
$\frac{1}{2C} < \eps$. In the first sum of \eqref{eq2}, we now 
relax the condition that $x_i \neq x_j$ for $i \neq j$, 
allow the last jump to go anywhere instead of back to $x$, and allow jumps to go to any point of $X$ 
instead of just into $X_N$. Writing $\rho = \alpha - \eps$, we then find that 
\be \label{ast}
(\ast) \leq \e{-\frac{\xi_-}{C}} \sum_{x_1 \in X} \e{- \rho \xi(x_1-x)} 
\ldots \sum_{x_{k} \in X} \e{- \rho \xi(x_{k}-x_{k-1})} 1_{\{|x_i-x_{i-1}| > D \text{ holds $j$ times} \}}.
\ee
In the above expression, $j$ of the sums above are over $\{ x_i \in X: |x_i - x_{i-1}| > D \}$, 
and thus bounded by 
\[
\sup_{y \in X} \sum_{x_i: |x_j - y| > D} \e{-\rho \xi(x_j-y)} = \sum_{z \in X: |z| > D} \e{-\rho \xi(z)} =: 
\eps_D.
\]
By choosing $D$ even larger, we can 
make $\eps_D$ as small as we want.  
The remainder of the terms in \eqref{ast} are bounded by 
$Q$. 

Taking the different orders in which large and small 
terms can appear into account, we find 
\[
\bbE_N \Big( \e{\tfrac{1}{kC} \sum_{y \in C_x(\pi)} \xi(\pi(y)-y)}, 
|C_x| = j, \, R_{x,D} = \frac{j}{k} \Big) \leq \binom{k}{j} \eps_D^{j} Q^{k-j}.
\]
For positive $\mu<1/2$, we now choose $D$ so large that $\eps_D < (Q^{\delta-1} \mu)^{1/\delta}$.
This achieves 
$\eps_D^{j} Q^{k-j} < \mu^k$ for all $j > \delta k$, and thus 
\[
\e{\xi_-} \bbE_N \Big( \e{R_{x,D}}, R_{x,D} > \delta \Big) \leq \sum_{k=2}^N 
\sum_{j= \lceil \delta k \rceil}^k \binom{k}{j} \eps_D^j Q^{k-j} \leq \sum_{k=1}^N 
\sum_{j= 0}^k \binom{k}{j} \mu^k 
\leq \frac{2 \mu}{1 - 2 \mu}.
\]
Taking $D$ so large that  
$\mu < \frac{1}{2} \tfrac{\delta \e{\xi_-}} {1 + \e{\xi_-} \delta}$, the last expression is
less than $\delta$, and this finishes the proof. 
\end{proof}

We have used a compactness argument for showing the existence of an infinite 
volume limit. This leaves open the question of uniqueness, and of a possible 
DLR characterization. Both of these questions are addressed 
by Biskup and Richthammer in \cite{BiRi_unplublished}, which is at present the only result on 
the existence of infinite volume SRP measures besides the one given above. 
Biskup and Richthammer treat the case where $X$ is a locally finite subset 
of the real line fulfilling weak regularity conditions, 
and where $\xi$ satisfies a strong form of convexity. 
They show that under these circumstances, 
infinitely many infinite volume Gibbs measures exist, fulfill a set of DLR 
conditions, and are characterized by the 'flow' across $x=0$, 
i.e.\ net number of jumps passing across the origin in the positive direction. 
It is likely that by combining the methods presented above 
with those of \cite{BiRi_unplublished}, one could prove similar results for SRP where 
$X=\bbZ$ and the potential $\xi$ fulfills only \eqref{existence condition}.
In higher dimensions, the results of \cite{BiRi_unplublished} suggest that there
will always be at least countably many extremal Gibbs measures, characterized by 
the vector of winding numbers around the infinite torus. The latter quantity would be 
the $L \to \infty$ limit of \eqref{winding vector} below, assuming it exists. We expect the
proof of such a statement to be difficult. 

\section{Markov chain Monte Carlo} \label{mcmc}

The remainder of this paper deals with numerical results about long cycles in planar spatial 
random permutations. From now on, we will always assume either $X = \bbZ^2$, or the triangular lattice, 
where the fundamental region is an equilateral triangle of side length $(4/3)^{1/4}$. The side length 
is chosen such that the large scale density of points stays equal to one, which is important as we want
to quantitatively compare results for the two lattices. We will also restrict our attention to quadratic jump 
energy. Therefore, \eqref{energy} will now always read 
\be \label{energy2}
H_N(\pi) = \sum_{x \in X_L} |x - \pi(x)|^2,
\ee
with $|.|$ the periodic distance on $X_L$.

We use the Metropolis algorithm described in \cite{GLU12}: 
we choose a pair $(x,y)$ of points uniformly from all pairs of nearest neighbors in $X_L$
and propose to exchange their targets. In other words, 
given a pair $(x,y)$ and a permutation $\pi$, let $\pi'$ be the permutation such that $\pi'(z) = \pi(z)$ for 
all $z \notin \{x,y\}$, $\pi'(y) = \pi(x)$ and $\pi'(x) = \pi(y)$. The energy difference between these 
permutations is 
\[
\Delta H(\pi,\pi') = H(\pi')-H(\pi) = |\pi(y) - x|^2 + |\pi(x) - y|^2 - |\pi(x) - x|^2 - 
|\pi(y) - y|^2.
\]
A {\em Metropolis step} consists in choosing the pair $(x,y)$ uniformly at random, and 
switching from $\pi$ to $\pi'$ with probability $\e{- \alpha \Delta H(\pi,\pi')}$ 
if $\Delta H(\pi,\pi') >0$, and with probability one otherwise. It is not difficult to see, and was shown in \cite{Ke10},
that the resulting Markov chain is ergodic, and that the transition 
rates satisfy the detailed balance condition with respect to the measure \eqref{measure}. A 
{\em Metropolis sweep} consists of $N = |X_L|$ separate Metropolis steps; thus on average, each point in the 
lattice had the chance to exchange its target twice during each sweep. As observed in \cite{GLU12}, the 
dynamics of the Metropolis chain effectively loses ergodicity in the limit of large systems. 
The reason is that the winding number of a permutation is strongly metastable. More precisely, if we define
the vector of winding numbers
\be \label{winding vector}
w(\pi) := \frac{1}{L} \sum_{x \in X_N} (\pi(x) - x),
\ee
then each cycle of $\pi$ that winds around the 
torus $X_L$ in a given coordinate direction contributes $1$ or $-1$ to the corresponding component of $w$. 
For the vector of absolute winding numbers 
\[
|w|(\pi) = \frac{1}{L} \sum_{C \text{ cycle of } \pi} \Big| \sum_{x \in C} (\pi(x)-x) \Big|,
\]
all winding cycles contribute $1$. Both quantities are strongly metastable, 
since to dynamically destroy a winding 
cycle we need jumps of length $L/2$ to appear, which is exponentially unlikely. This poses a serious 
problem when one tries to numerically simulate winding numbers, but is irrelevant for our purposes. 
On the contrary, we will be able to use metastability to our advantage when computing 
a numerical approximation to the fractal dimension of long cycles for large $\alpha$. 

We do, however, use the swap-and-reverse 
algorithm given in \cite{Ke10}. This means that some of the longer cycles (say, the ten longest) are reversed, with 
probability $1/2$ each, after a certain number of sweeps, i.e.\ each jump from $x$ to $\pi(x)$ is replaced by 
one from $\pi(x)$ to $x$. One effect of this procedure is that the winding number can now change dynamically by 
multiples of two. More importantly, we have found that reversing cycles after a small number of sweeps 
improves the mixing properties of the dynamics, even though identifying the longest cycles in a 
permutation is computationally somewhat expensive. 

We have simulated system sizes between $L = 1000$ and $L = 4000$, 
implying $|X_L|$ between $10^6$  and $1.6 * 10^7$. A difficulty with planar permutations that seems to be 
absent in higher dimensions is that a very long thermalization time is necessary for some observables. 
When starting the MCMC chain from the 
identity permutation, the average jump length $\frac{1}{N} \sum_{x \in X_L} |\pi(x) - x|$ is well 
converged to its long time average after around $100$ sweeps. This is comparable to the thermalization time 
observed in \cite{GLU12} for the fraction of points in macroscopic cycles. However, quantities that depend
on the occurrence of large cycles take much longer to thermalize in the two-dimensional case. 

\bfig
\centerline{{\tiny a)}
\includegraphics[width=0.47 \textwidth]{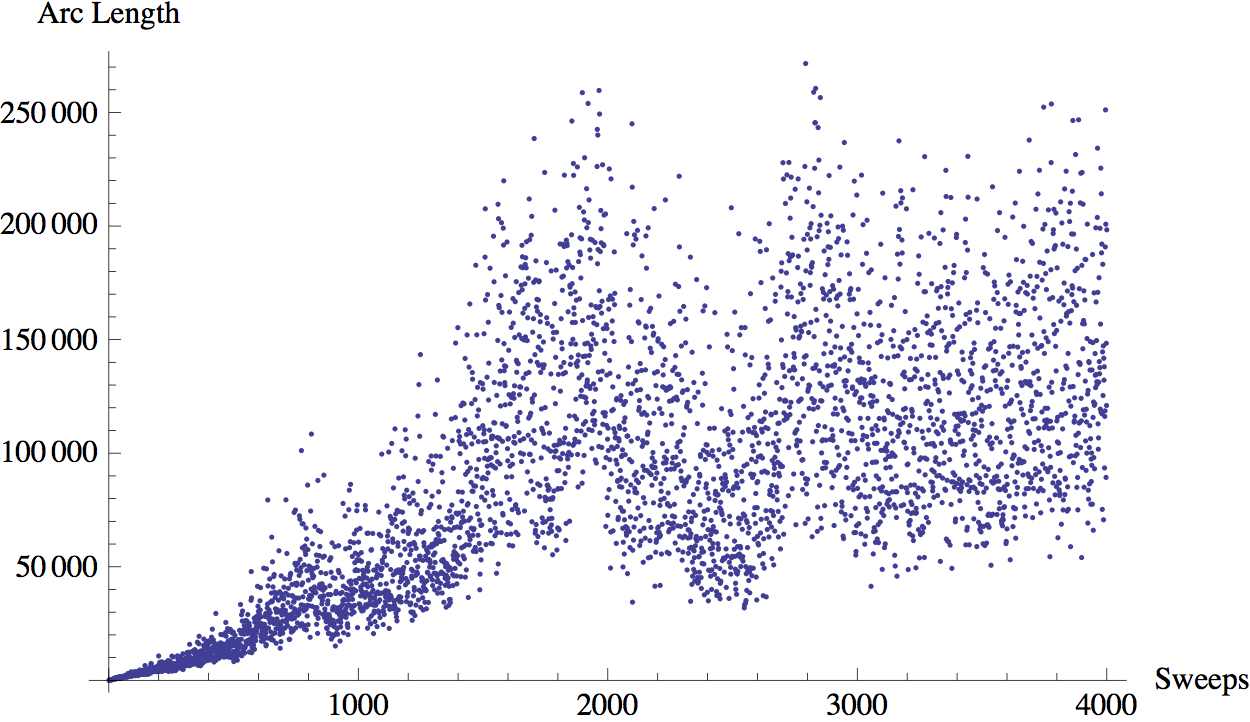} 
{\tiny b)}\includegraphics[width=0.47 \textwidth ]{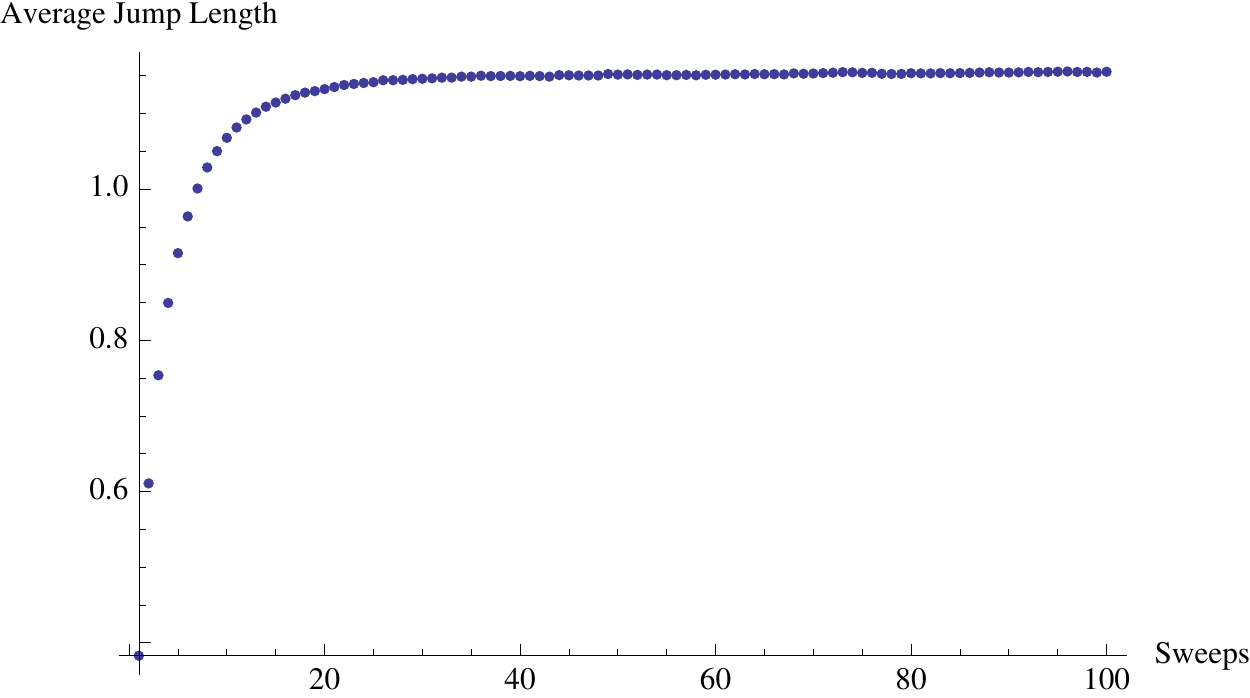}}
\caption{ a) The arc length of the longest cycle during the first 4000 thermalization sweeps on a $1000 \times 1000$ lattice, for $\alpha=0.5$. 
b) The average cycle length during the first $100$ thermalization steps of the system described in a).}
\label{fig2}
\efig

As an illustration, Figure \ref{fig2} a) shows the arc length (as a piecewise constant curve in $\bbR^2$) 
of the longest cycle after each sweep, for the first $4000$ sweeps of a random permutation 
on a square grid of side length $L=1000$, at $\alpha = 0.5$.
The initial condition is the identity permutation. We see that it takes about $2000$ sweeps to get even close 
to equilibrium. After that, the maximal arc length has very large fluctuations; this is expected 
since a 
single Monte Carlo step can break a long cycle into two roughly equally large pieces 
(see \cite{GLU12} for more information on the corresponding split-merge mechanism). Some sort of 
equilibrium is reached eventually, 
but we note that the maximal lengths at around 2500 sweeps are significantly 
smaller than is typical. These strong time correlations seem not to be connected with thermalization; 
we have observed them even after millions of sweeps. 
In contrast, Figure \ref{fig2}b) shows the average jump length during the first 100 
sweeps of the same simulation that 
was used for Figure \ref{fig2}a). Clearly, the average jump length thermalizes on a different scale.

As a consequence, we have to be careful when simulating planar SRP. In all of the simulations below, we 
thermalize for at least $100000$ sweeps, take many hundreds of samples, 
and allow at least $10$ sweeps in between consecutive samples. 

\section{Kosterlitz-Thouless phase transition} \label{section KT}

In three-dimensional SRP, the existence of a phase transition 
is known in the annealed case \cite{BU1, BU4}, and numerically evident in the lattice case 
\cite{GRU}. The relevant order parameter is the fraction of points in macroscopic cycles. 
Let us give a little detail. We write 
\be \label{ell_x}
\ell_x(\pi) = \inf \{ n \in \bbN: \pi^n(x) = x \} \leq \infty
\ee
for the length of the cycle of $\pi$ containing $x$. 
Let $\bbP$ denote the limit of a sequence of finite volume SRP measures $\bbP_N$ along a subsequence;
we will suppress the subsequence from the notation and write $\bbP_N \to \bbP$. 
By Theorem \ref{existence}, for each $K>0$ and 
all permutations outside a set of arbitrarily small probability, 
it can be decided whether $\ell_x(\pi) > K$ by looking at the images $\pi(y)$ for all $y$ 
from a sufficiently large finite set. Thus we know that  
\be \label{fraction macro}
\nu(K) := \lim_{N \to \infty} \bbP_N(\ell_x > K) = \lim_{N \to \infty} \frac{1}{N} \bbE_N( \sum_{x \in X_L} 
1_{\{\ell_x(\pi) > K\}})
\ee
exists. $\nu(K)$ is the probability that $x$ is in a cycle longer than $K$, and is independent of $x$ by 
translation invariance.  We define  
\[
\alpha_c = \inf\{ \alpha \geq 0: \lim_{K \to \infty} \nu(K) = 0 \}
\]
as the critical parameter value where infinite cycles start to appear. 
Since the case $\alpha=0$ corresponds to uniform permutations, we know $\alpha_c \geq 0$. 
In Section 4 of \cite{BU1} it is shown that $\alpha_c < \infty$ in rather great generality; in 
particular, this includes the case considered in Theorem \ref{existence}. In the same paper, 
it is proved for the {\em annealed} case (i.e. where the measure is averaged over 
the points of the box $\Lambda$) that $\alpha_c > 0$ 
if $d \geq 3$, and  $\alpha_c = 0$ for $d \leq 2$. 
For the lattice case, no rigorous lower bounds on $\alpha_c$ exist. It is
observed numerically in \cite{GRU} that $\alpha_c >0$ if and only if $d \geq 3$. 

This does not imply that no long cycles exist in two dimensions, 
only that they are too small to hit a given point $x$ with positive probability 
in the infinite volume limit.  
Indeed, we can expect the length of the longest cycle to always diverge as $L \to \infty$, 
as the following heuristic argument shows. Fix $K \in \bbN$. 
$\eps_{K,N} :=\bbP_N(\ell_x > K)$ can be expected to be strictly positive, uniformly in $N$, 
for all fixed $K \geq 0$ and $\alpha > 0$.
For $x,y \in X$ with $|x-y| \gg K$, the events $\{\ell_x > K\}$ and $\{ \ell_y > K\}$ should 
be almost independent for sufficiently large $\alpha$. By taking $M$ points 
$x_1, \ldots, x_{M}$ far enough apart, we get  
\[
\bbP_N (\sup_x \ell_x \leq K ) \leq \bbP_N (\max_{i \leq M} \ell_{x_i} \leq K) 
\approx \bbP_N(\ell_x \leq K)^{M} = (1-\eps_K)^M
\]
for all $M>0$ and sufficiently large $\alpha$. We conclude that 
$\bbP(\sup_x \ell_x > K) =1$ for all $K$ in this case. Since it is reasonable to assume that 
$\alpha \mapsto \bbP (\sup_x \ell_x \leq K )$ is decreasing in $\alpha$, 
infinite cycles actually exist in infinite volume for all $\alpha$. Thus
neither $\bbP(\ell_x > K)$ nor $\bbP(\sup_x \ell_x > K)$ 
have interesting $K \to \infty$ limits for $d=2$.  

A quantity that does have an interesting behavior in both $d=2$ and $d=3$ is 
the rate of decay of $\bbP(\ell_x > K)$ as  
as $K \to \infty$. In Section 4 of \cite{BU1} it is shown that this decay is exponential
for sufficiently large $\alpha$. For $d=3$ 
no decay is expected for sufficiently small $\alpha$. In $d=2$, what changes with $\alpha$ 
is the rate of decay: we will give numerical evidence for a 
critical value $\alpha_c >0$ where for $\alpha > \alpha_c$, $\bbP(\ell_x > K)$ 
decays exponentially in $K$, and for $\alpha < \alpha_c$ the decay is algebraic.
This phenomenon is well known in two-dimensional systems and is called 
Kosterlitz-Thouless phase transition.

The best known example of a model exhibiting the Kosterlitz-Thouless phase transition 
is the $XY$-model, also known as plane rotor model. 
It is a lattice spin system, where a $2$-dimensional unit vector $v_x$ 
is attached to each point of $\bbZ^2$, and the Hamiltonian in a finite 
volume $\Lambda \subset \bbZ^2$ 
is given by $H(\bsv) = - \sum_{x \sim y} v_x \cdot v_y$. The sum is meant to be over 
pairs of nearest neighbors in $\Lambda$. 
The finite volume Gibbs measure at 
inverse temperature $\beta$ is defined in the usual way by 
\[
\mu_\Lambda(\dd \bsv)  = \frac{1}{Z_\Lambda} \e{- \beta H(\bsv)} \dd \bsv,
\]
with $\dd \bsv$ denoting the $|\Lambda|$-fold product of the uniform measure on the unit circle.
The measure is 
invariant under simultaneous rotation of all the vectors by the same angle, and by the 
Mermin-Wagner theorem \cite{MW66} this continuous symmetry cannot be broken 
in the infinite volume limit.
Thus, even though the interaction encourages neighboring vectors to point in similar directions, 
in a two-dimensional infinite volume measure 
the spins $v_j$ do not have a (random) preferred direction, i.e.\
$\lim_{|\Lambda| \to \infty} |\bbE_{\Lambda}(v_x)| = 0.$
In \cite{FS81}, Fr\"ohlich and Spencer prove the presence of a Kosterlitz-Thouless 
phase transition in the plane rotor model and several other two-dimensional classical lattice models.
In the plane rotor model, it means that the correlation $\bbE(v_x \cdot v_y)$ decays as a power law 
in $|x-y|$ for large $\beta$, and exponentially for small $\beta$. As remarked already in the original 
article by Kosterlitz and Thouless \cite{KT73}, this is connected with the formation of vortices; 
the latter are topological defects of the system, where summing the angle of $v_x$ along a large closed loop encircling the center of a vortex
will give a multiple of $2 \pi$. It is argued in \cite{KT73} that below the critical temperature, 
vortices appear in pairs, leading to algebraic decay of correlation; while above the critical 
temperature, they unbind and isolated vortices occur, leading to exponential decay of correlations. 
Thus, the phase of algebraic decay of correlations is characterized by the {\em absence} of 
topological defects. 

\bfig
\begin{center}
{\tiny a)}\includegraphics[width=0.47 \textwidth]{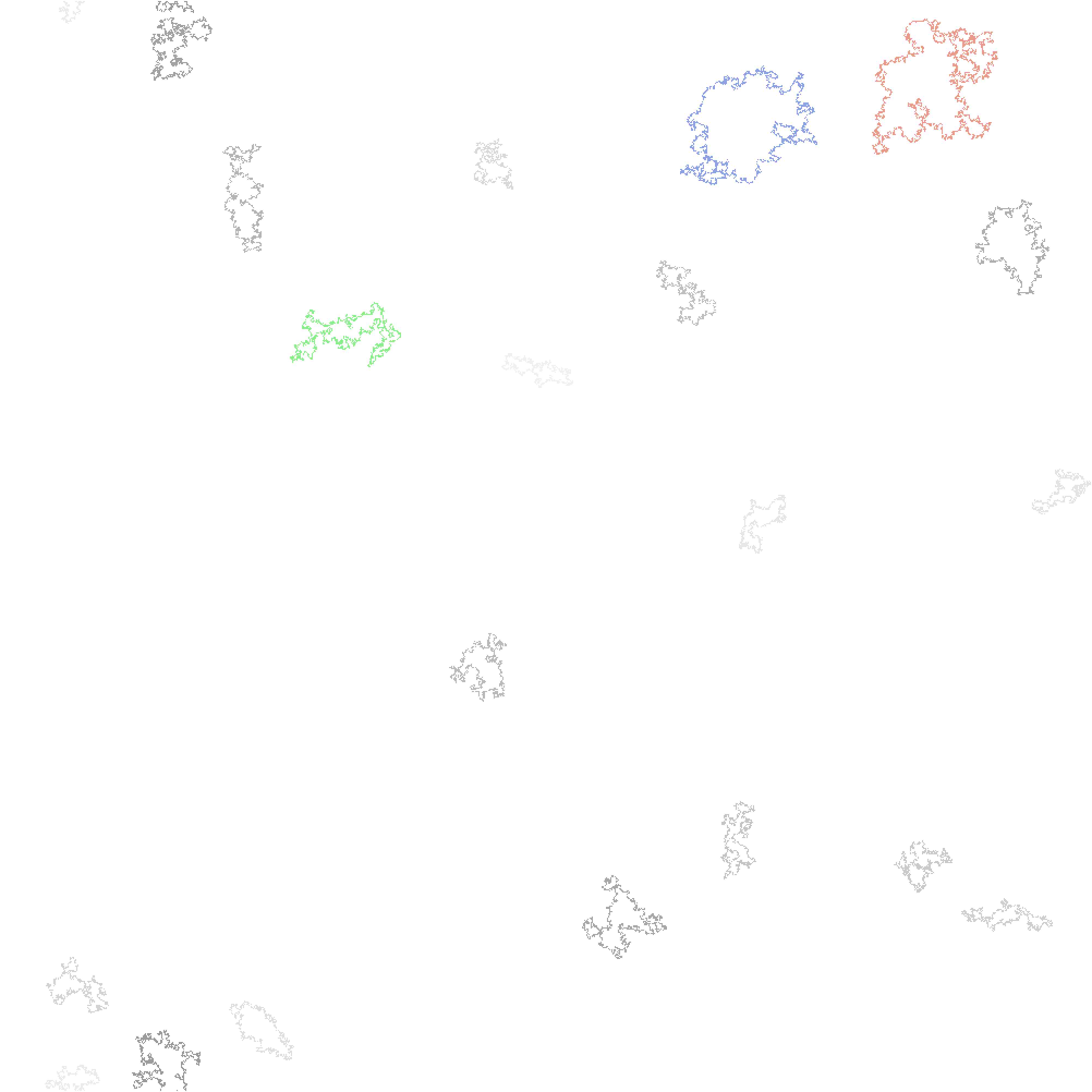} 
{\tiny b)}\includegraphics[width=0.47 \textwidth]{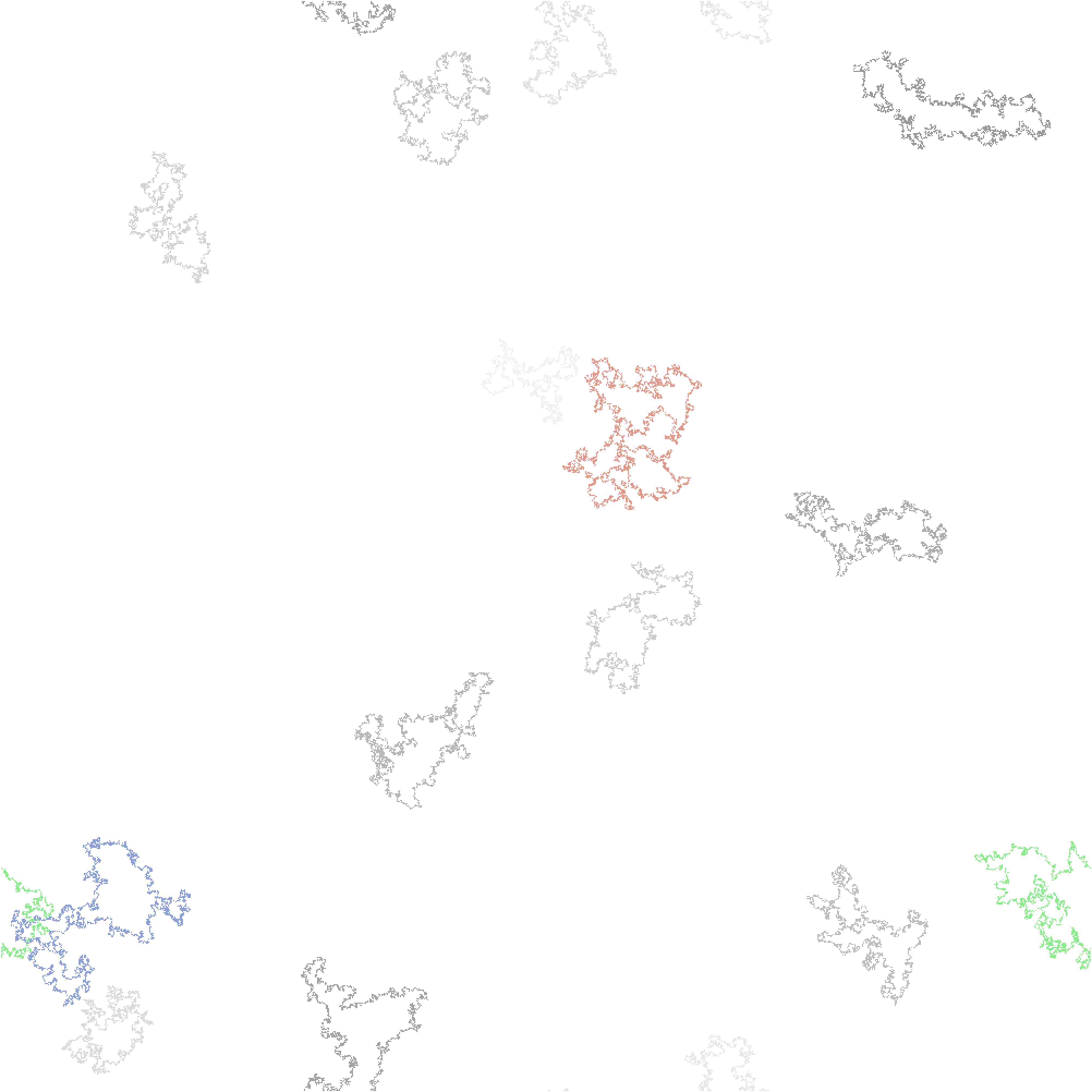}\\[4mm]
{\tiny c)}\includegraphics[width=0.47 \textwidth]{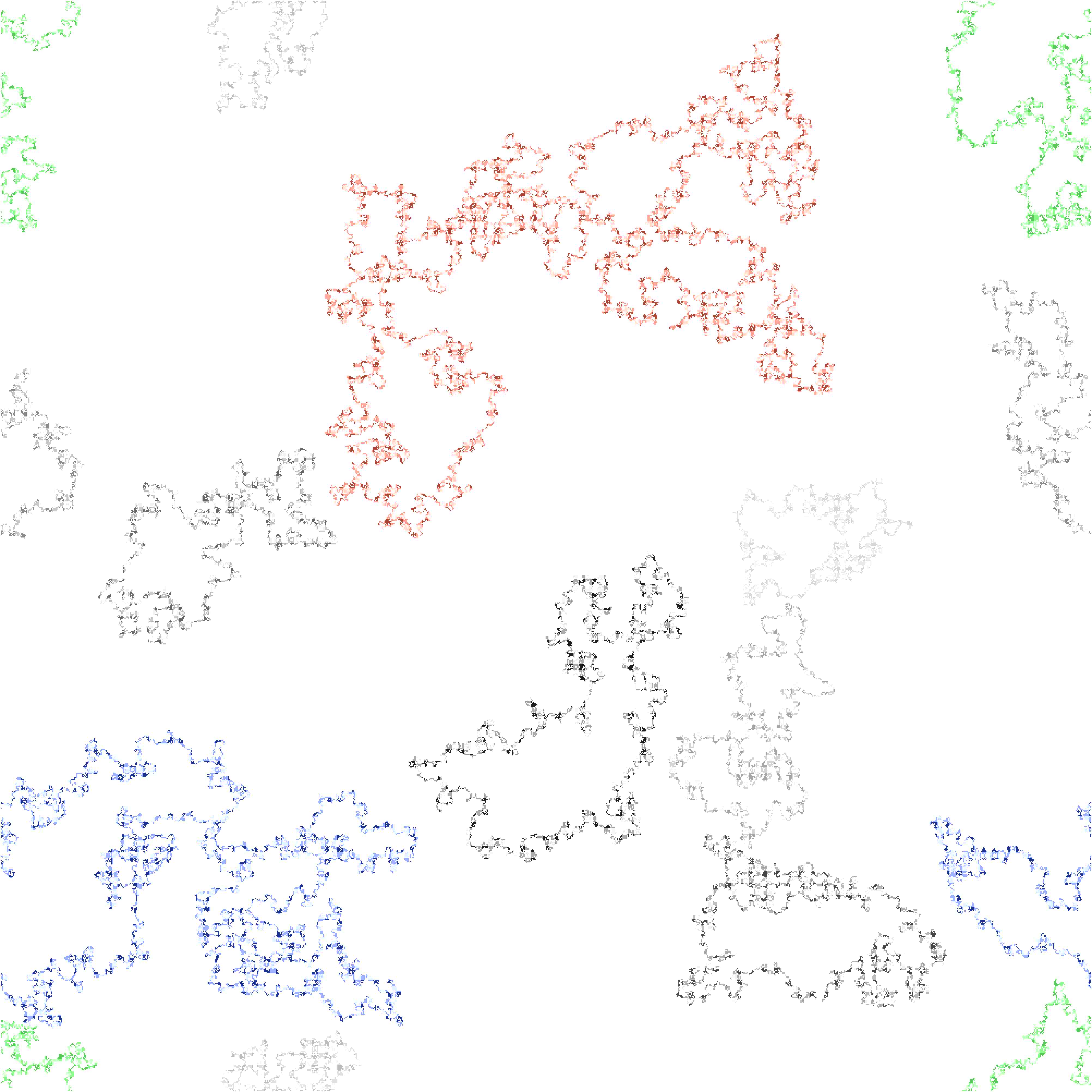} 
{\tiny d)}\includegraphics[width=0.47 \textwidth]{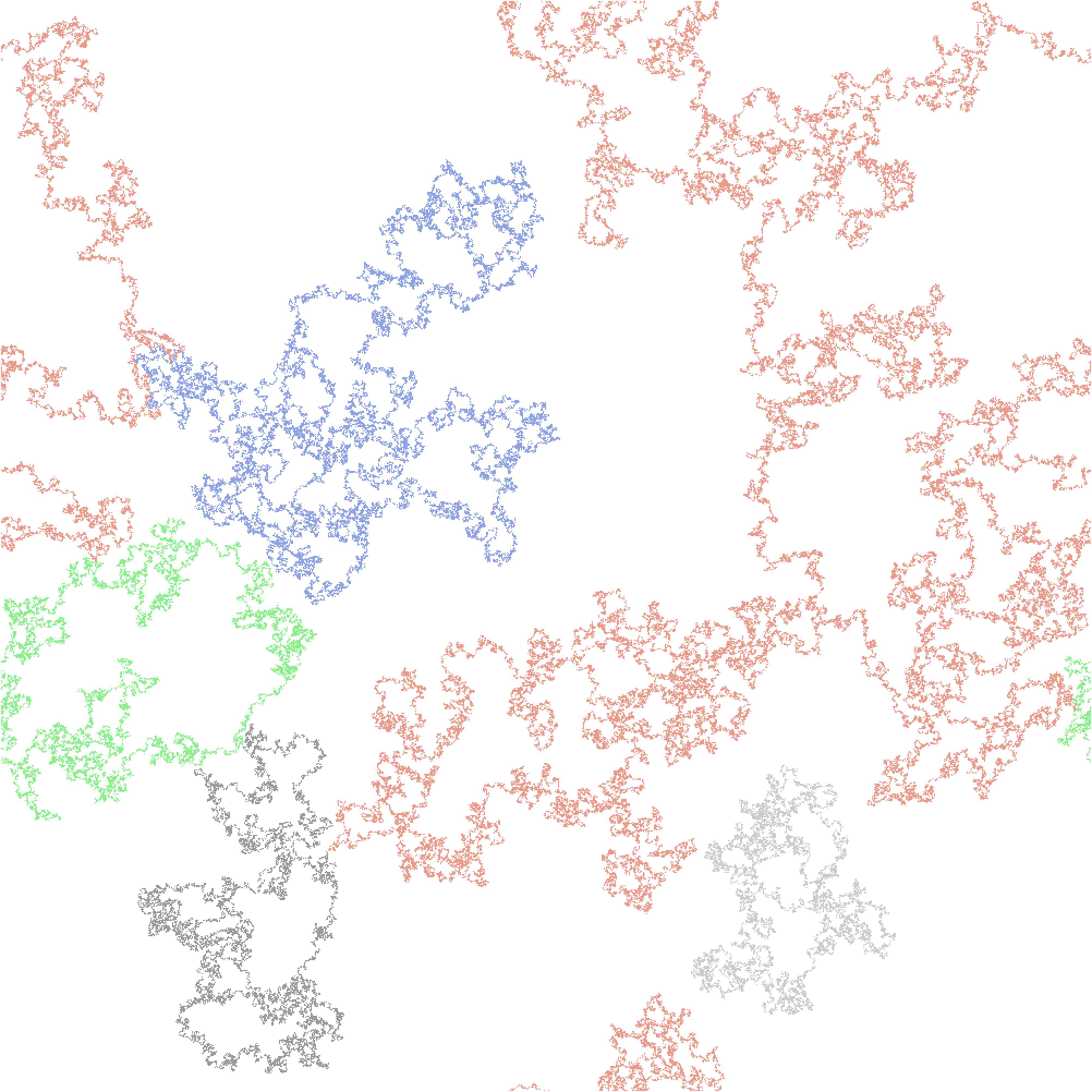}
\end{center}
\caption{The images show the lattice points belonging to 
the longest cycles of a random permutation on a square grid of side length $1000$. 
The three longest cycles are in red, blue, green, in this order. 
a) $20$ longest cycles for $\alpha = 0.8$; b) $15$ longest cycles for $\alpha = 0.75$; 
c) $10$ longest cycles for $\alpha = 0.7$; d) $5$ longest cycles for $\alpha = 0.6$.
The formation 
of 'bubbles', growing into complicated long cycles as $\alpha$ decreases, is clearly visible.}
\label{fig3}
\efig

It is curious that the situation seems to be exactly the opposite in SRP. 
Above, we introduced $\nu(K) := \bbP(\ell_x > K)$ as the quantity that will exhibit a transition 
from exponential to algebraic decay. A quantity that is more closely connected to spatial 
correlations is the probability $\mu(x,y)$ that two points $x$ and $y$ are in the same cycle. 
Since all jumps are essentially of finite length, it is clear that $\mu(x,y) \leq c \nu(K)$ when 
$|x-y| > C K$ for sufficiently large $K$. On the other hand, a cycle of length $K^2$ containing 
a point $x$ must, in two dimensions, contain at least one point of distance $K/4$ from $x$, 
and the number of such points is of the order $K$. 
So, $\mu(x,y) \geq C'\nu(K^2) / K $ for $|x-y| \leq K/4$. Thus  
$\mu(x,y)$ decays algebraically in $|x-y|$ if and only if $\nu(K)$ does, although the power law
may be different. It is clear that $\nu(K)$ 
decays more slowly in the {\em presence} of long cycles.  
Therefore slow decay of correlations is 
linked to the system being topologically non-trivial, i.e.\ having long cycles.
In the plane rotor model, in contrast, 
algebraic decay is linked to the system being 
topological trivial, i.e.\ having no isolated vortices. 
This is another reason why it would be confusing to call $\alpha$ the inverse
temperature: slow decay of correlations 
happens when $\alpha$ is small, i.e.\ when the system
is more chaotic. This is opposite to the usual situation in statistical mechanics. 
Figure \ref{fig3} gives some illustrations of the phenomenon of topological 
'bubbles' forming in the system around a certain value of $\alpha$. 

Let us now study the behavior of $\nu(K)=\bbP(\ell_x > K)$ for large $K$ and different 
$\alpha$. For large $\alpha$, it is essentially proved in \cite{BU1} that $\nu(K)$ decays 
exponentially in $K$. Since the decay rate is not explicitly stated there, we repeat 
the argument for the convenience of the reader. 

\begin{proposition} \label{large alpha}
Let $\alpha$ be large enough so that 
$s := \sum_{x \in X \setminus \{0\}} \e{- \alpha \xi(x)} < 1$. Then 
$\bbP(\ell_x > K) \leq s^K/(1-s)$. 
\end{proposition}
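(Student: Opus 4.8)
The plan is to establish the bound first in finite volume, with a constant independent of $L$, and then pass to the infinite-volume limit. Since $\xi(0)=0$ (this holds automatically for the energy $\xi(x)=|x|^2$ of the present section, and in general may be arranged by subtracting a constant from $\xi$, which does not affect $\bbP_N$), the decisive — and completely elementary — observation is this: because $s=\sum_{x\in X\setminus\{0\}}\e{-\alpha\xi(x)}<1$ is a sum of positive terms, every individual term is bounded by the whole sum, so $\e{-\alpha\xi(z)}\leq s<1=\e{-\alpha\xi(0)}$ for each $z\neq 0$, and hence $\sup_{z\in X_L}\e{-\alpha\xi(z)}=1$.

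Next I would decompose $\{\ell_x=k\}$ over the ordered cycles through $x$. For distinct points $\bsx=(x_0,x_1,\dots,x_{k-1})$ of $X_L$ with $x_0=x$, let $A_{\bsx}$ be the event that $\pi(x_i)=x_{i+1}$ for $0\leq i<k$, with the convention $x_k:=x_0$; these events are disjoint and their union over all such $\bsx$ equals $\{\ell_x=k\}$. Every $\pi\in A_{\bsx}$ is obtained from a permutation of $X_L$ that fixes $x_0,\dots,x_{k-1}$ by changing its values on those $k$ points, and by \eqref{energy} this costs exactly $\sum_{i=1}^{k}\xi(x_i-x_{i-1})$ in energy (here $\xi(0)=0$ enters). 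Hence
\[
\bbP_N(A_{\bsx})=\e{-\alpha\sum_{i=1}^{k}\xi(x_i-x_{i-1})}\;\bbP_N\bigl(\pi(x_i)=x_i \text{ for } 0\leq i<k\bigr)\ \leq\ \prod_{i=1}^{k}\e{-\alpha\xi(x_i-x_{i-1})},
\]
the last step because the remaining factor is a probability. This is exactly the mechanism used in the proof of Theorem~\ref{existence}.

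I would then sum over cycles. Parametrising $\bsx$ by the jumps $j_i:=x_i-x_{i-1}$, $1\leq i\leq k-1$ (a bijection onto $(X_L)^{k-1}$, with the closing jump $j_k:=-(j_1+\dots+j_{k-1})$ determined) and relaxing the distinctness of the $x_i$ to the weaker constraints $j_i\neq 0$ for $1\leq i\leq k-1$ (which hold for any genuine $k$-cycle), I get
\[
\bbP_N(\ell_x=k)\ \leq\ \Bigl(\sup_{z\in X_L}\e{-\alpha\xi(z)}\Bigr)\prod_{i=1}^{k-1}\Bigl(\sum_{j\in X_L\setminus\{0\}}\e{-\alpha\xi(j)}\Bigr)\ \leq\ 1\cdot s^{\,k-1},
\]
where the closing-jump factor $\e{-\alpha\xi(j_k)}$ has been bounded by $\sup_z\e{-\alpha\xi(z)}=1$ and each of the remaining $k-1$ sums is a sub-sum of $s$. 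Summing a geometric series, $\bbP_N(\ell_x>K)=\sum_{k>K}\bbP_N(\ell_x=k)\leq\sum_{k>K}s^{k-1}=s^{K}/(1-s)$, uniformly in $L$. Finally, letting $L\to\infty$ along the subsequence defining $\bbP$ and using $\bbP(\ell_x>K)=\lim_N\bbP_N(\ell_x>K)=\nu(K)$, as established around \eqref{fraction macro}, gives the claim.

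The argument is short, and the only two places calling for care are: the role of the normalisation $\xi(0)=0$ in rewriting $\bbP_N(A_{\bsx})$ in terms of the probability of fixing $k$ points; and the bookkeeping in the cycle sum, which must be arranged so that the closing jump contributes a factor at most $\sup_z\e{-\alpha\xi(z)}=1$ rather than $\sum_z\e{-\alpha\xi(z)}=1+s$. This latter point is precisely where the hypothesis $s<1$ — as opposed to mere summability of $x\mapsto\e{-\alpha\xi(x)}$ — is used, and it is what makes the geometric bound come out with base $s$. The passage to the infinite-volume measure is already justified earlier in the paper and presents no difficulty.
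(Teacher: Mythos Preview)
Your proof is correct and follows essentially the same approach as the paper's: decompose $\{\ell_x=k\}$ over ordered cycles through $x$, bound the weight of the remaining permutation by $1$ via the identification with the probability of fixing $k$ points (using $\xi(0)=0$), then relax the distinctness constraints and sum the jumps geometrically. The only cosmetic difference is in the treatment of the closing jump: the paper relaxes the constraint $x_k=x$ to $x_k\neq x_{k-1}$ and thereby gains an extra factor of $s$ (so $\bbP_N(\ell_x=k)\leq s^k$), whereas you retain the closure and bound that single factor by $\sup_z\e{-\alpha\xi(z)}=1$ to get $s^{k-1}$; both routes lead to the same final bound $s^K/(1-s)$.
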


\begin{proof}
As in the proof of Theorem \ref{existence}, we write $C_x(\pi)$ for the 
cycle of $\pi \in \caS_N$ containing $x$. Let 
$M_{x,k}$ be the set of {\em all} vectors $(x_0, \ldots x_{k-1})$ from $X_L^k$ 
with mutually different elements and with $x_0 = x$, and for $x \in M_{x,k}$ 
let $\caS_{N,\bsx} \subset \caS(X_L)$ be the set of permutations where 
$\pi^j(x) = x_j$ for all $j < k$ and $\pi^k(x)=x$. By the same reasoning 
as in the proof of Theorem \ref{existence}, for all $N = N(L)$ we have 
\[
\begin{split}
\bbP_N (\ell_x = k)  & = \frac{1}{Z_N} \sum_{\bsx \in M_{x,k}} 
\e{- \alpha \sum_{i=1}^k \xi(x_i - x_{i-1})} \sum_{\pi \in \caS_{N, \bsx}}
\e{-\alpha \sum_{y \notin C_x} \xi(\pi(y) - y)} \\
 & \leq \sum_{\bsx \in M_{x,k}} \e{- \alpha \sum_{i=1}^k \xi(x_i - x_{i-1})} \leq \sum_{x_1 \in X \setminus \{x\}} \e{-\alpha \xi(x_1 - x)} \times \\
 & \times \sum_{x_2 \in X \setminus \{x_1\}} \e{-\alpha \xi(x_2 - x_1)} \cdots  
\sum_{x_k \in X \setminus \{x_{k-1}\} } \e{-\alpha \xi(x_k - x_{k-1})} 
 \leq s^k.
 \end{split}
\]
Thus $\bbP(\ell_x \geq K) \leq \sum_{k=K}^\infty s^k = \frac{s^K}{1-s}$.
\end{proof}

For small $\alpha$, we have to resort to numerical simulations. By translation
invariance, $\nu(K)$ is equal to the expected fraction of points in cycles longer than
$K$, a quantity that is easy to approximate numerically. In a fully thermalized 
configuration, we compute the fraction of points in cycles longer than $N^{\gamma}$ for 
all $\gamma \in \{k/100: 1 \leq k < 100\}$; we average this vector over  
1000 samples with 
10 full sweeps between consecutive samples, thus obtaining an approximation to 
$\nu(N^\gamma)$ for all such $\gamma$. Figure \ref{fig4} a) shows a double logarithmic 
plot of $\nu(N^\gamma)$ as a function of $N^\gamma$ for those $\gamma$ 
where $\nu(N^\gamma)>0$, for  $\alpha = 0.5$ and various system sizes. 
We can clearly see the power law decay of $\nu(K)$ up to a point where the system size restricts
the formation of even longer loops and finite size effects dominate the picture. This happens
already at relatively large probabilities: 
finite size effects occur at $\nu(K) \approx 0.1$ even in the largest system that we could study. 
The reason is that the power law decay comes with a small exponent: 
in the case of $\alpha = 0.5$ we find approximately $\nu(K) \sim K^{-0.189}$.

Figure \ref{fig4} b) gives evidence of a regime change. 
It shows the $\nu(K)$ for different values of $\alpha$ in steps of $0.1$. This 
leads to different decay exponents between $\alpha = 0.3$ and, apparently,  $\alpha = 0.7$; 
at $\alpha = 0.8$ the double logarithmic plot no longer has a straight piece. Indeed, a logarithmic
plot of $\nu(K)$ (not shown) reveals exponential decay.  
We thus have numerical evidence that planar SRP 
undergo a Kosterlitz-Thouless phase transition, and that the critical 
value of $\alpha$ is smaller than $0.8$. 

\bfig
\begin{center}
{\tiny a)}
\includegraphics[width=0.47 \textwidth]{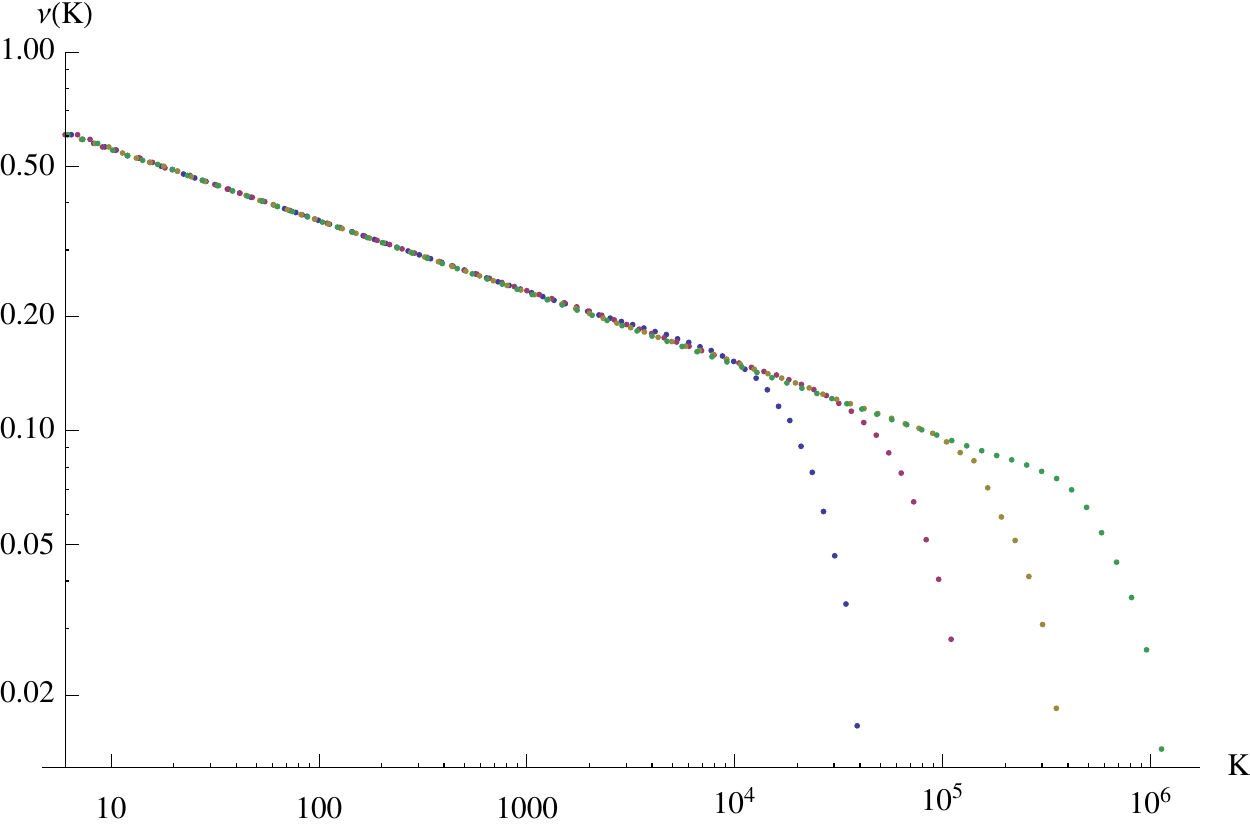} 
{\tiny b)}\includegraphics[width=0.47 \textwidth]{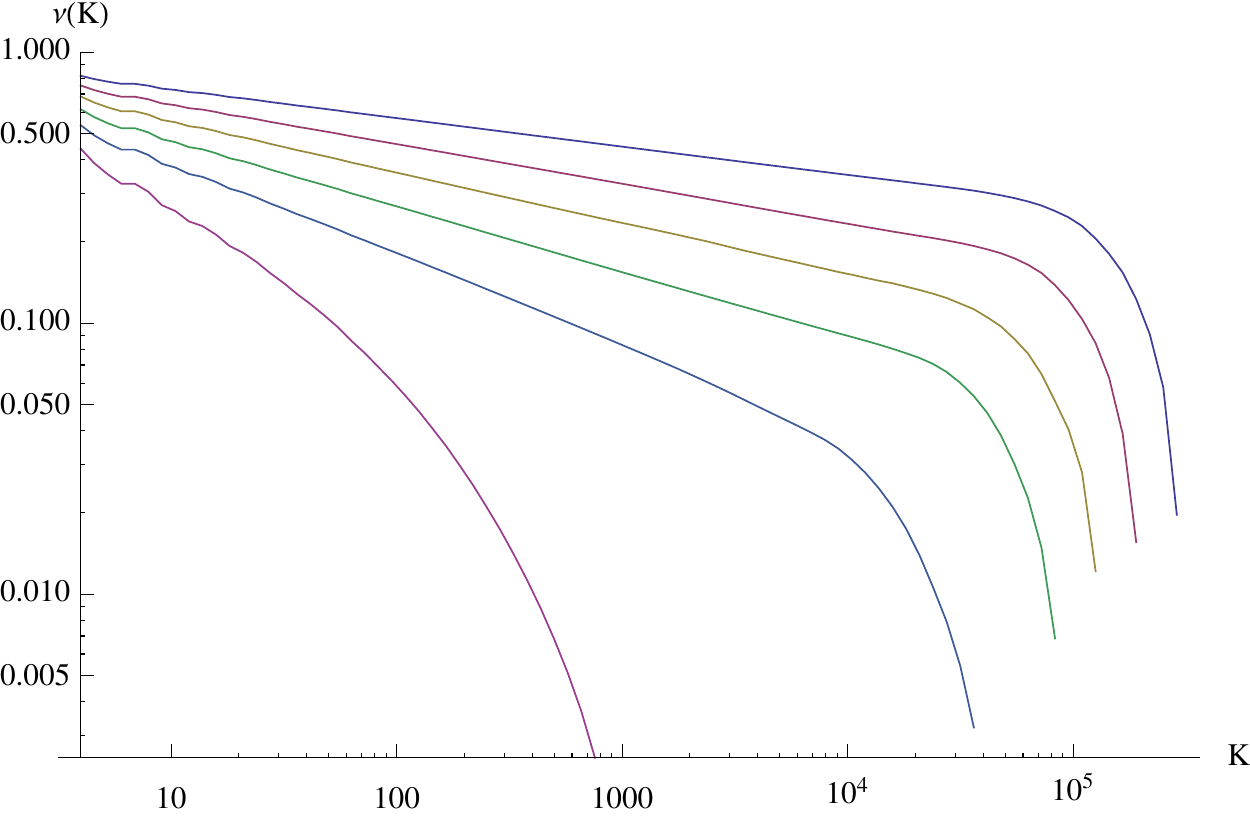}
\end{center}
\caption{The probability $\nu(K)$ that a given point is in a cycle longer than $K$. a)  
A double logarithmic plot of $\nu(K)$ at $\alpha=0.5$ and for system sizes 
$N=500^2,1000^2,2000^2,4000^2$. 
The larger system sizes maintain the straight line for larger $K$. b) $K \mapsto \nu(K)$ for 
system size $N=1000^2$, and values of $\alpha$ between $0.3$ and $0.8$, increasing in steps 
of size $0.1$. Larger values of $\alpha$ lead to faster decay of $\nu(K)$. }
\label{fig4}
\efig

To narrow down the interval in which we expect the critical value of $\alpha$, we 
investigate the decay of $K\mapsto \nu(K)$ as a function of $\alpha$. 
For small $\alpha$, the power at which $\nu(K)$ decays is the slope of the double logarithmic
plot, which was estimated by linear regression. This slope was measured for 
$\alpha$ between $0.25$ and $0.7$, increasing in steps of $0.025$. Two additional measurements 
were taken at $\alpha = 0.705$ and $\alpha = 0.71$, these being in the range in which the 
cycle length of the typical permutation changes especially rapidly, see Figure \ref{fig3}.
For even larger $\alpha$, it becomes difficult to identify a straight piece in the  
analogues of Figure \ref{fig4} b). 

The results of our simulations are shown in Figure \ref{fig5} a). The decay exponent 
$p(\alpha)$ is almost linear in $\alpha$ for $\alpha \leq 0.5$, with 
some curvature developing for larger $\alpha$. A reasonable guess for fitting a curve $p(\alpha)$ 
to the measured points is 
\be \label{formOfP}
p(\alpha) = a + b |\alpha-\alpha_0|^{\gamma},
\ee
for $\alpha < \alpha_0$; $\alpha_0$ would then be a natural candidate 
for the critical value of $\alpha$. Fitting the parameters yields 
$a \approx 0.363$, $b \approx - 0.439$, $\alpha_0 \approx 0.721$ and 
$\gamma \approx 0.617$. The curve $p(\alpha)$ for these parameters is 
also shown in Figure \ref{fig5} a), and it is a remarkably good fit. 
Although the numerical values of the constants are certainly debatable, 
one might be tempted to conclude that $p(\alpha)$ has the form given in \eqref{formOfP} with nontrivial exponent,
and that $\alpha_0$ is greater than $0.7$. 

\bfig
\begin{center}
{\tiny a)}
\includegraphics[width=0.45 \textwidth]{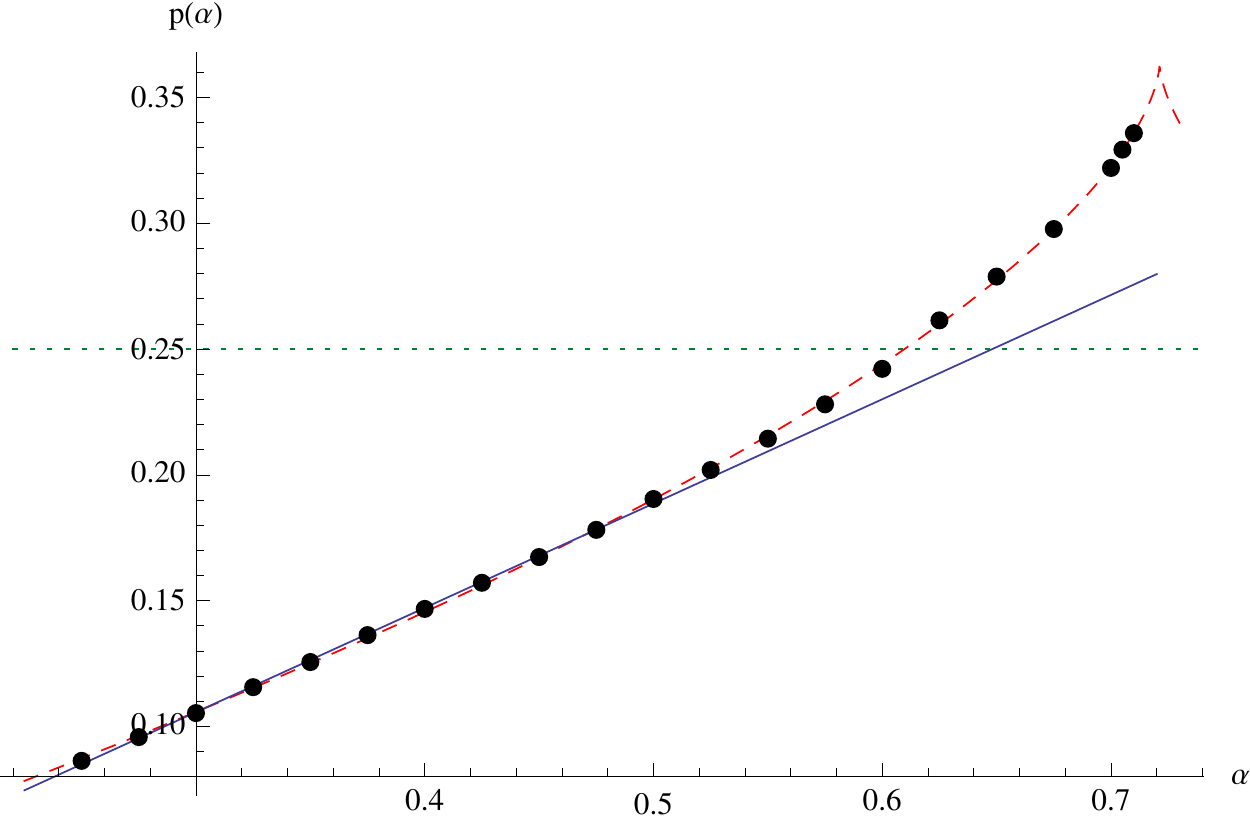} 
{\tiny b)}
\includegraphics[width=0.45 \textwidth]{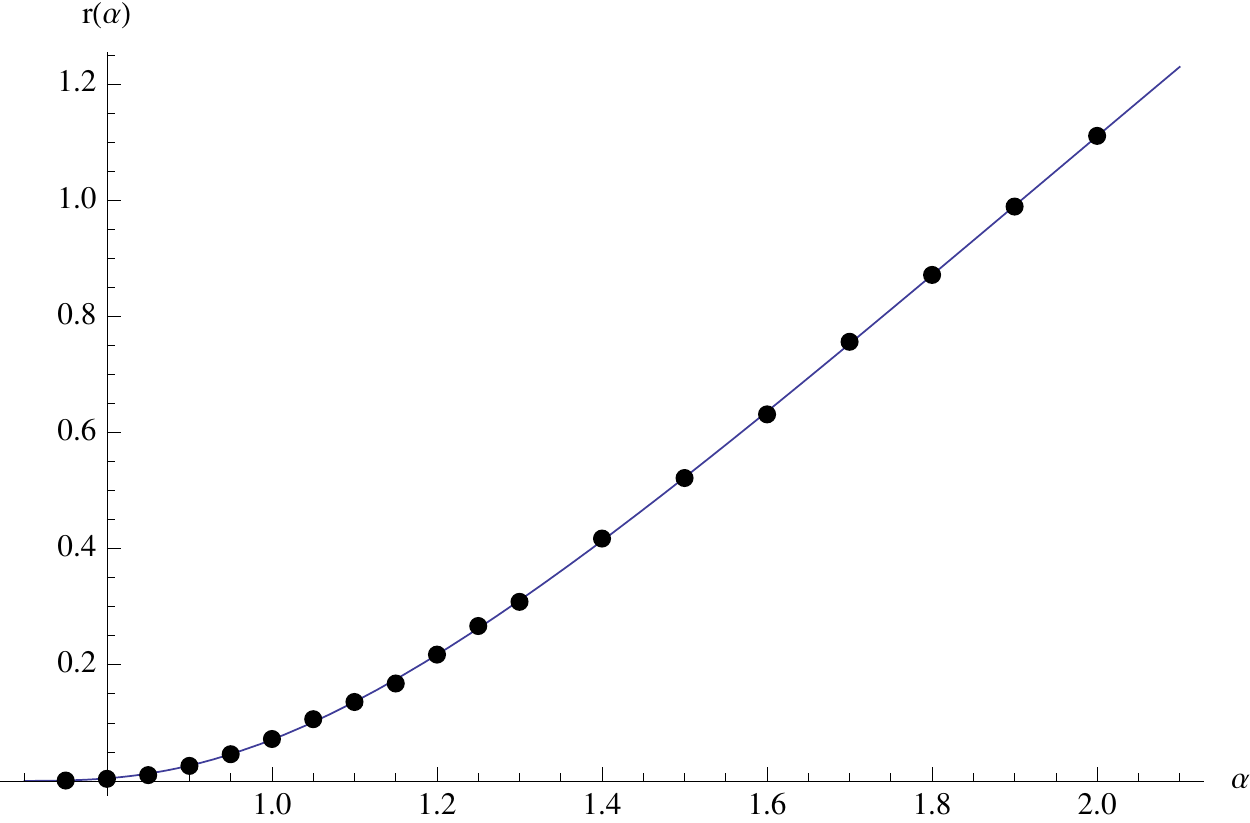} 

\end{center}
\caption{a) The power $p(\alpha)$ of the power law decay for small values of $\alpha$. 
The black dots are the points obtained by MCMC simulation, the dotted red curve is the best fit of the form 
\eqref{formOfP}. The solid blue line is a linear extrapolation of the values of $p(\alpha)$ for $\alpha \leq 0.5$. 
The dotted green line is the 'universal' critical exponent $p_c = 0.25$.
b) The rate $r(\alpha)$ of the exponential decay for large values of $\alpha$. The black dots are obtained by MCMC simulation,
the blue line is the best fit of the form \eqref{formOfR}.
}
\label{fig5}
\efig

There is, however, a different reasoning that leads to a different result. In the $XY$-model
it is known that, to first approximation, the power $p(\beta)$ with which correlations decay 
is linear in $\beta$ \cite{KT73}. In addition, 
the critical exponent $p(\beta_c)$ is known to be $1/4$, and is thought to be universal for 
all systems that exhibit a Kosterlitz-Thouless phase transition. 
The argument leading to $p(\alpha_c) = 1/4$
is based on the analysis of vortex unbinding, see \cite{ChLub2000}, 
Chapter 9, for a summary. As discussed above, we do not 
have any obvious vortices in SRP, and the phase of power law decay is characterized 
by the presence of non-trivial topological objects rather than their absence. We nevertheless assume that 
the general theory applies. Since we can trust our numerics really only for small $\alpha$ and not for those 
close to the alleged phase transition, we linearly extrapolate all measured points for values of 
$\alpha \leq 0.5$. The result is $p_{\rm lin}(\alpha) \approx -0.019 + 0.415 \alpha$,
shown in Figure \ref{fig5} a), as a dashed line. It intersects the line $p = 0.25$ at 
$\alpha_c \approx 0.648$, which is a rather different prediction from the one obtained in the previous paragraph,
and at the moment it is not clear why we should give it any credit.  
We will however soon see that it fits extremely well with a similar prediction obtained from applying the theory of 
the Kosterlitz-Thouless phase transition to simulation results for large $\alpha$.  

By Proposition \ref{large alpha} we know that $\nu(K)$ decays exponentially for 
large enough $\alpha$. The decay rate itself and its dependence on $\alpha$ have
to be estimated numerically. To obtain a numerical approximation of $K \mapsto \nu(K)$, 
we average the fraction of points in cycles 
longer than $K$ over $10000$ samples, for all $1 \leq K \leq 2000$, 
with $10$ full sweeps between each two samples. We do this for $0.75 \leq \alpha \leq 1.3$ 
in intervals of $0.05$, and for $1.4 \leq \alpha \leq 2$ in intervals of $0.1$. 
For $\alpha \geq 0.9$, cycles are shorter than $500$ with overwhelming probability, and it is therefore 
safe to use the system size $N = 500^2$. We focus on those points where  
$10^{-3} \leq \nu(K) \leq 10^{-6}$, in order to make sure that we are on the one hand in the asymptotic 
regime, and that on the other hand we do not yet see effects due to the finite system size and the finite simulation
time. By determining the slope of $K \mapsto - \ln(\nu(K))$ in this range of $K$, we obtain the exponential decay rate $r(\alpha)$, i.e.\ 
\be \label{nu}
\nu(K) = \bbP(\ell_x > K) \sim \exp(-r(\alpha) K ).
\ee
Assuming that the above relation is exact for all $K>0$, we obtain the correlation length 
$\xi(\alpha) = \bbE(\ell_x) = 1/r(\alpha)$ of the system at parameter $\alpha$. 
Figure \ref{fig5} b) shows the sampled values of $r(\alpha)$ as black dots. 
By analogy to the general theory of the Kosterlitz-Thouless transition \cite{ChLub2000},
$r(\alpha)$ is expected to be of the form 
\be \label{formOfR}
r(\alpha) = c \exp( - b / |\alpha - \alpha_c|^{1/2}),
\ee
with $\alpha_c$ being the critical temperature. Fitting a curve of the form \eqref{formOfR} to the 
data yields an extremely good result, shown in Figure \ref{fig5} b). The parameters are 
$c \approx 20.99$, $b \approx 3.434$, and $\alpha_c \approx 0.636$. 
The latter is remarkably close to the value $0.648$ obtained by linear extrapolation of 
the decay exponent for small $\alpha$, especially considering that these quantities depend very 
sensitively on small deviations of the data. It seems therefore evident that planar SRP
undergo a KT phase transition near $\alpha = 0.64$. In contrast, a glance at Figure \ref{fig3} shows that  
the phenomenology of a typical SRP changes most between $\alpha = 0.7$ and 
$\alpha = 0.8$, which is when the bubbles appear. 

There a roughly analogous situation in the XY model, 
where a maximum of the specific heat at temperature $T > T_c$ appears due to the entropy 
associated with the vortex pair unbinding, see \cite{ChLub2000}, Figure 9.4.3.  
In our model, the specific heat is connected 
to the derivative of the expected jump length with respect to $\alpha$: fixing $x \in X_L$, 
using the form \eqref{energy2} of the jump energy and translation invariance, 
we get the 'average energy per particle' as 
\[
\langle E \rangle / N = - \frac{1}{N} \partial_\alpha \ln Z_N =  \bbE_N(|\pi(x)-x|^2). 
\]
Therefore the 'specific heat capacity' is given by 
\[
C/N = \frac{\alpha^2}{N} \partial^2_\alpha \ln Z_N = - \frac{1}{N} \alpha^2 \partial_\alpha \bbE_N(|\pi(x)-x|^2).
\]
We have put the thermodynamic terminology in inverted commas since, as we discussed above, it is not easy 
to give a true thermodynamic interpretation of our system. Nevertheless, we can see the analogy with the XY-model: 
When bubbles start to form at some $\alpha > \alpha_c$, the expected jump 
length starts to change more rapidly, since a jump length of zero is not possible for a point in a cycle. For 
even smaller $\alpha$, a saturation sets in and the average jump length does not change too much any more. The 
critical value $\alpha_c$ is already well within this saturation regime. Thus, 
as in the XY model, the 'specific heat' is greatest for values of $\alpha$ that are above the KT transition point. 

\section{Fractal dimension} \label{section fractal}

In this section, we investigate geometrical properties of long SRP cycles. 
We give numerical evidence that long SRP cycles are random fractals with 
an almost sure fractal dimension $d(\alpha)$ 
depending on the parameter $\alpha$. $d(\alpha)$ 
shows an interesting change of behavior near the transition point $\alpha \approx 0.65$. 

As usual, the quantity that we approximate numerically is the 
box counting dimension, rather than the (numerically inaccessible) Hausdorff dimension. 
Let us recall the definition of the box counting dimension: 
we tile $\bbR^d$ with 
cubes of side length $\eps$, and for compact  $A \subset \bbR^d$ we write $N_A(\eps)$ 
for the minimal number of cubes needed to cover $A$. The box counting dimension of $A$ is defined as 
\be \label{BCD}
d_{\rm Box}(A) = \lim_{\eps \to 0} \frac{\ln N_A(\eps)}{\ln (1/\eps)}
\ee
if this limit exists. 
It is known that the box counting dimension is always at least as large as the 
Hausdorff dimension, and there are simple (but unnatural) examples where it is significantly larger. 
For all 'natural' fractals that I am aware of, the box counting dimension 
agrees with the Hausdorff dimension. See \cite{Falc03} for more background on fractal geometry. 

For a numerical approximation of the box counting dimension, we sample from a fully thermalized random
permutation of side length $L$, and retain the points that belong to the longest cycle as our set $A$.
We take $m$ samples of $N_A(\eps)$ for logarithmically equidistant values of $\eps$. Explicitly, 
we cover the volume $\Lambda$ with $n_j^2$ boxes of side length $r_j = L / n_j$, for $j=0, \ldots, m-1$, with $n_j > n_{j-1}$ 
for all $j$. $n_0$ must be chosen such that the smallest 
box size $r_0$ is large enough to avoid seeing the finite grid size, and $n_j$ is the nearest integer to 
$n_0 c^j$, for some $c<1$.  
For quantitative accuracy of the results below, it is crucial that $n_j$ is integer; otherwise boundary boxes will be 
of different size than bulk boxes, giving small but noticeable errors. We count the number $N_A(r_j)$ 
of boxes that contain at least one point of the longest cycle. If $A$ does have a box counting dimension, 
we expect the points $\{ (\ln (1/r_j), \ln N_A(r_j)): 1 \leq j \leq m \}$ to lie approximately on a straight line, 
whose slope is an approximation to $d_{\rm Box}(A)$. This is confirmed by Figure \ref{fig6} a), which shows 
the results of the above procedure for $L = 4000$, $c = 0.95$, and $n_j=4000$. 
The latter means that the minimal box size is $1$, which is of course too small; 
finite size effects are clearly visible in Figure \ref{fig6} a). On the other hand,
the values for $n<1000$, i.e. box side lengths greater than $4$, are nicely linear. 
The analogue of Figure \ref{fig6} for other values of $\alpha$ looks very similar.
We have thus very good reason to assume that long cycles of SRP are indeed fractals. 

For values of $\alpha$ greater than $0.7$, long cycles become
increasingly rare, and are virtually absent above $\alpha = 0.85$. This is expected in view of the 
previous section, but is inconvenient for measuring their box counting dimension. 
To avoid this problem we use metastability to our advantage: 
we take as initial condition the spatial permutation 
with precisely one periodic cycle of minimal length, with nearest neighbor jumps. Thermalizing this initial 
condition will force a cycle of at least length $L$ to be in the system at all times, due to the strong metastability
of SRP. 

\bfig
\begin{center}
{\tiny a)}
\includegraphics[width=0.45 \textwidth]{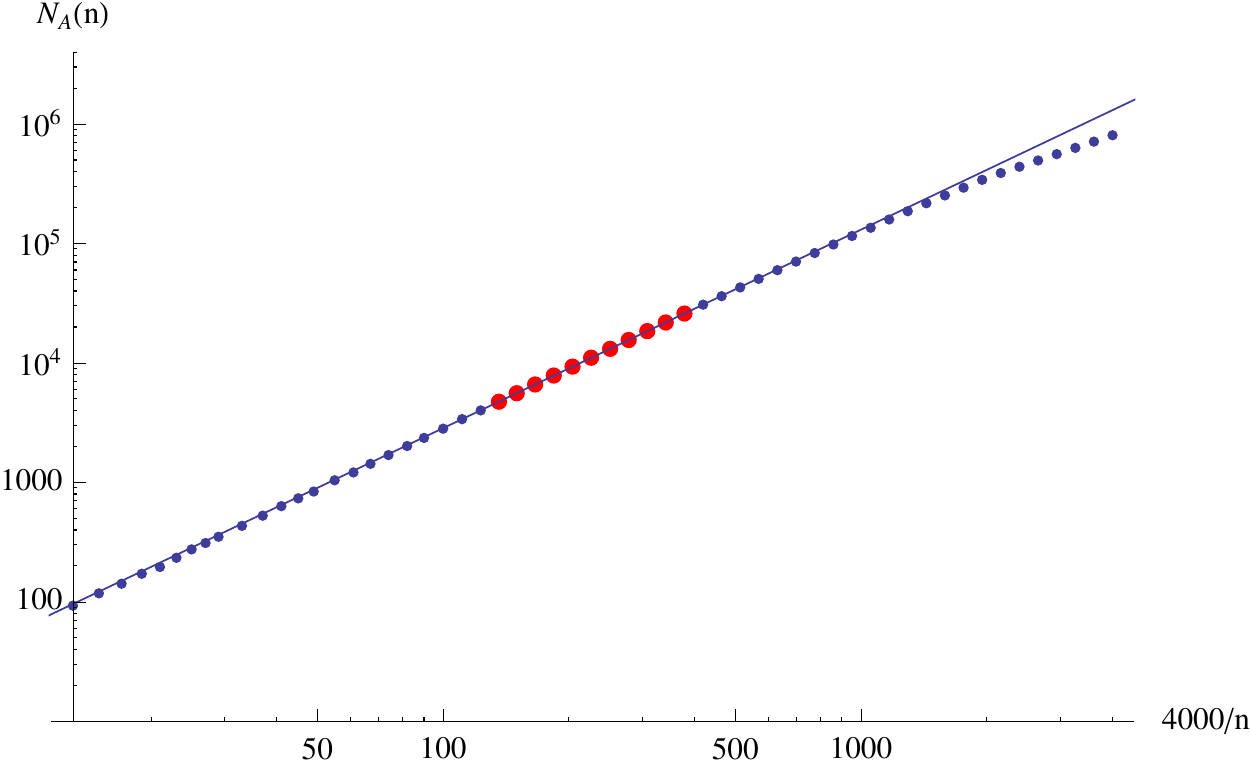} 
{\tiny b)}
\includegraphics[width=0.45 \textwidth]{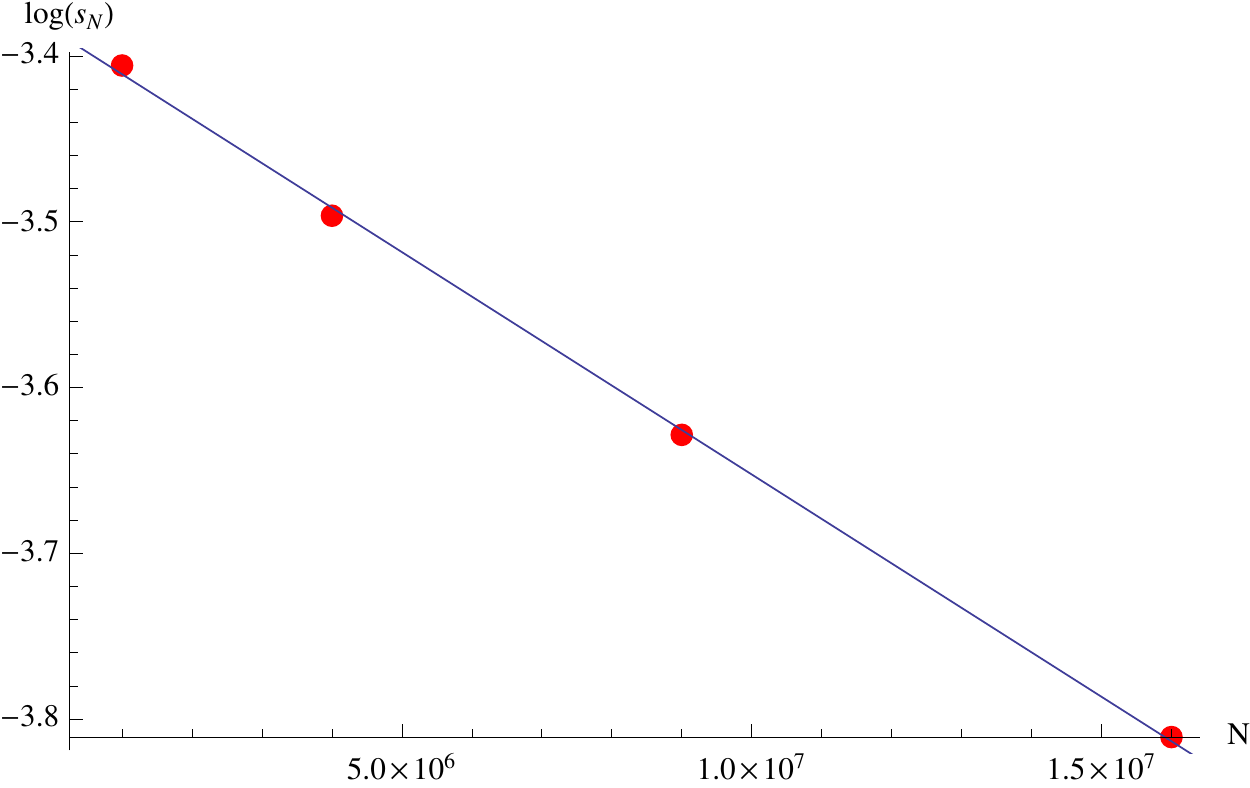} 
\end{center}
\caption{a) Double logarithmic plot of the number $N(\eps)$ of boxes needed to cover a long SRP cycle at
$\alpha = 0.5$, as a function of the inverse of the box side length $\eps$. 
The linearity strongly suggests the existence of a box counting dimension. The straight line is fitted to the 
slightly larger red points. b) Logarithmic plot of the 
empirical standard deviation $s_N$ of $500$ samples for the box counting dimension, as a function of system size $N$. 
}
\label{fig6}
\efig

The fractal dimension, given by the slope of the linear approximation as discussed
above, does of course depend on the sample that we take. However, the dependence 
decreases with large system size: Figure \ref{fig6} b) shows a logarithmic plot of 
the standard deviation of a sample of $500$ measurements of the box counting dimension, for $\alpha = 0.5$ and system sizes 
$1000^2$, $2000^2$, $3000^2$ and $4000^2$. The minimal box side length was chosen to be $4$. 
The standard deviation clearly decreases exponentially with the system size; 
linear interpolation suggests a decay like $0.034 \exp(-2.62 \times 10^{-8} N)$. Based on this numerical evidence,
we conjecture that the box counting dimension of long SRP cycles is an almost sure property in the scaling limit. 

An accurate quantitive study of the box counting dimension as a function of $\alpha$ 
is tricky. The reason is that even though the curve in Figure \ref{fig6} a)
looks very much like a straight line, the slope of any local linear approximation 
depends to some degree on the points chosen for the linear regression. 
The regression line that is shown in Figure \ref{fig6} a) is fitted to the values between 
$n_{\rm min}=136$ and $n_{\rm max}=378$, 
which means box sizes of roughly $10 \times 10$ up to 
$30 \times 30$; we thus increase the box volume by one order of magnitude.
Figure \ref{fig7} a) shows the resulting slope for various values of $n_{max}$, 
ranging from $n_{\rm max} = 2000$ (minimal box side length $2$) to $n_{\max} = 80$ 
(minimal box side length $50$), with $n_{\rm min} \approx n_{\rm max}/3$. We 
can see that for minimal box side lengths between $25$ and $70$, the slope does not change significantly,
while for smaller minimal box sizes, we start to see finite size effects. This is a problem, since 
systematically sampling from a SRP of side length $4000$ is very expensive in terms of computing time, 
especially so for high values of $\alpha$ where time correlations decay extremely slowly. 
On the other hand, at side length $1000$, where systematic computations are still feasible, 
effects from the fact that $L$ is finite set in before we lose effects from the discrete lattice.

\bfig
\begin{center}
{\tiny a)}
\includegraphics[width=0.45 \textwidth]{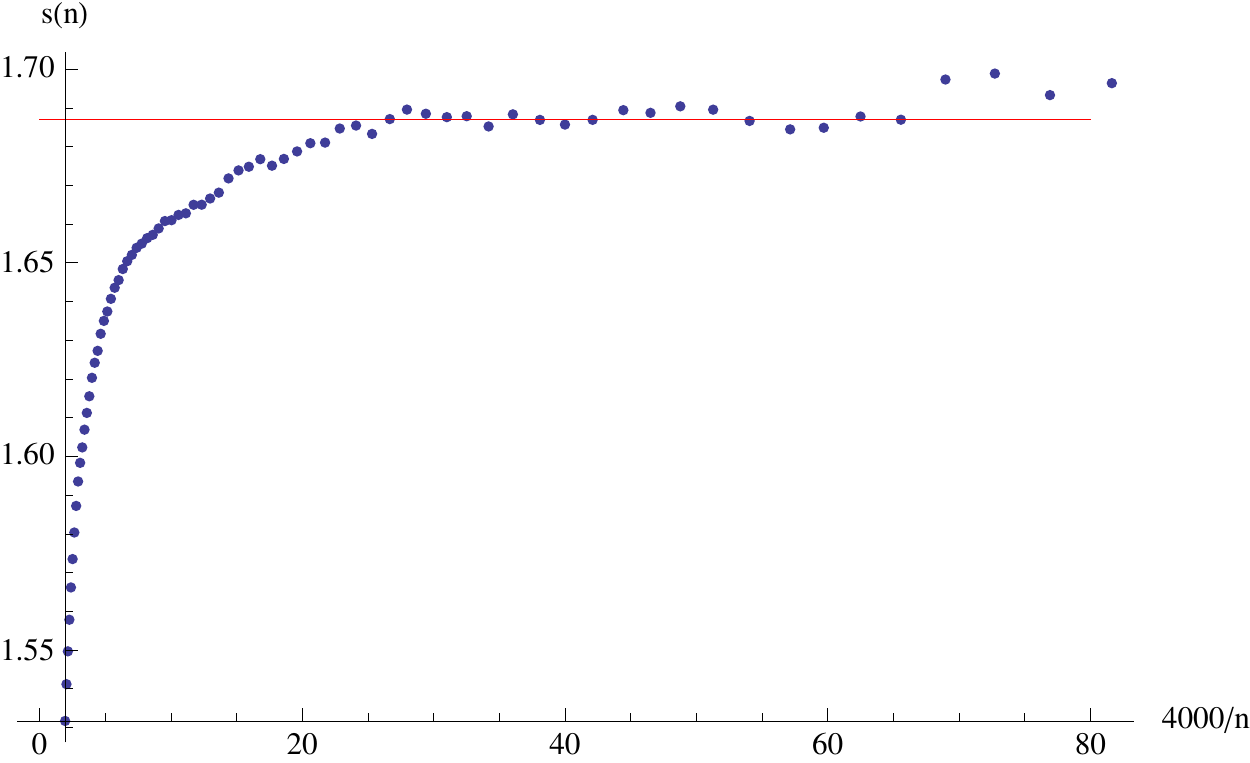} 
{\tiny b)}
\includegraphics[width=0.45 \textwidth]{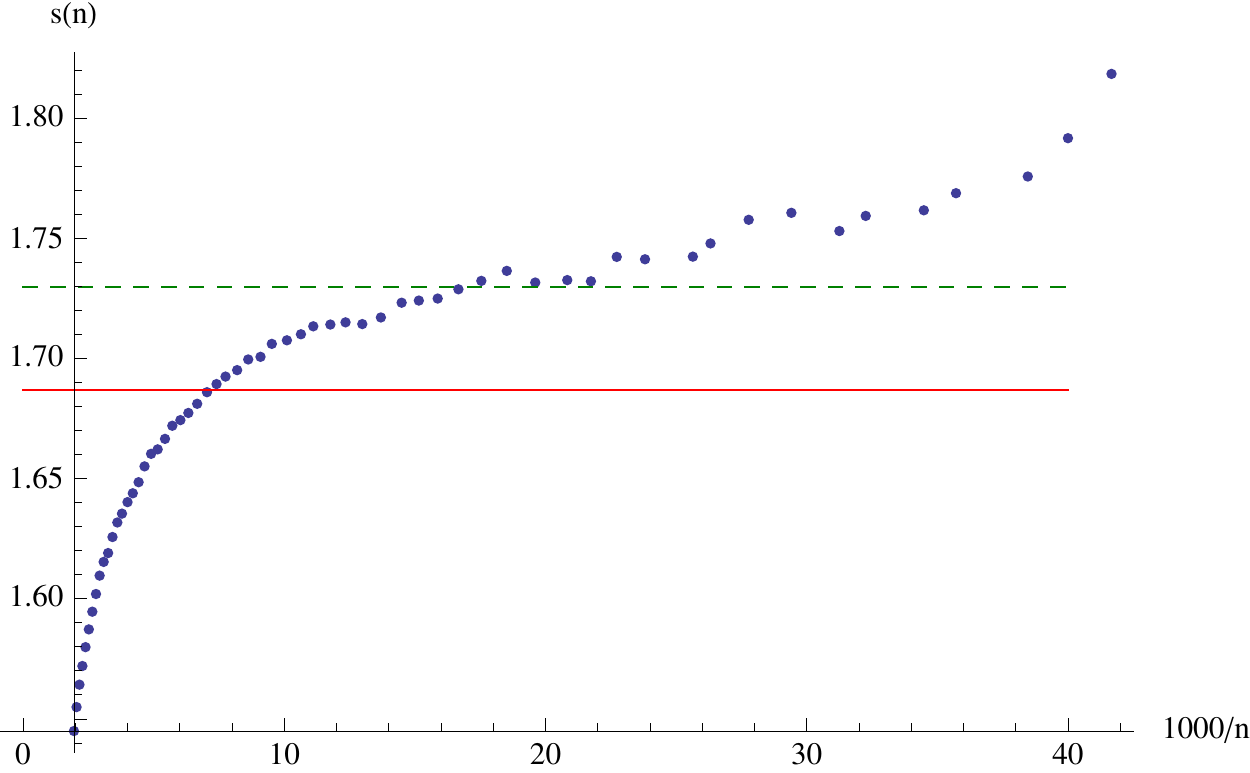} 
\end{center}
\caption{a) A linear function is fitted to the points $(-\ln \eps_{j,k},\ln N(\eps_{j,k}))_{j=0,\ldots,20}$, for the 
largest cycle of a SRP with $\alpha = 0.5$ and $L=4000$, where $\eps_{j,k} = n_k/4000 \times 0.95^j$; 
the slope of this local linearization is plotted as a function of the 
minimal box size $4000/n_k$. Effects from the discreteness of the lattice are visible below a minimal box side length 
of 25. In the region where the slope is approximately constant, the box counting dimension can be 
estimated as $1.687$, shown as a red line. 
b) The same graph, but for a SRP of side length $L=1000$ and $\alpha = 0.5$. Finite volume effects 
set in before the box size $25$ is reached. The line $x = 1.687$ is shown in red; however, by the finite 
standard deviation of the numerical box counting dimension for finite volume SRP this does not mean too much. 
A better guess might be $x = 1.73$, shown as a dashed green line; ultimately, however, $L=1000$ seems to be 
too small for an accurate numerical determination of the box counting dimension. 
}
\label{fig7}
\efig

This is demonstrated in Figure \ref{fig7} b), which shows the local slopes for a SRP of side length $1000$ at 
$\alpha = 0.5$, with $n_{\rm min} \approx n_{\rm max}/3$. So with this side length, we cannot hope for 
very accurate quantitative results on $\alpha \mapsto d_{\rm Box}(\alpha)$; these would require much more careful 
and extensive numerical studies. 
Fortunately, the qualitative  behavior of $d_{\rm Box}(\alpha)$ is interesting 
as well and, as some numerical testing showed, 
does not depend too much on our choice of $n_{\rm max}$. We chose a minimal box size of $8$ as a 
compromise between finite system size and finite lattice spacing errors. 

\bfig
\begin{center}
\includegraphics[width= \textwidth]{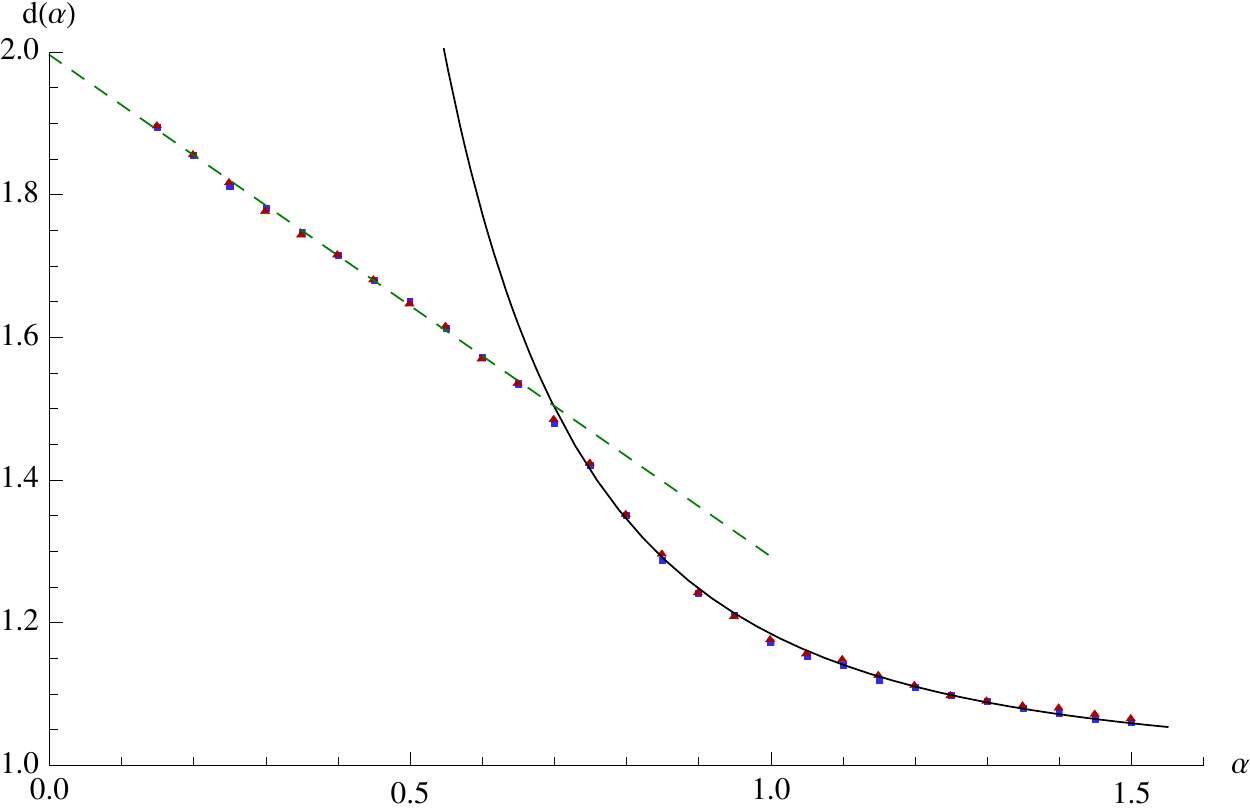} 
\end{center}
\caption{Box counting dimension $d(\alpha)$ of the longest cycle of 
a SRP, as a function of $\alpha$. Blue boxes 
are measured values for the square grid. Red triangles are measured 
values for the triangular grid. The green dashed and black 
solid line are linear and power law approximations to 
$d_{\rm Box}(\alpha)$ in the respective regimes. }
\label{fig8}
\efig

Figure \ref{fig8} shows, as a function 
of $\alpha$, the box counting dimension $d_{\rm Box}(\alpha)$ 
of long cycles from a SRP on a square lattice and a triangular lattice with $L=1000$.
At each value of $\alpha$ that has been sampled, 
the box counting dimension of the longest cycle was measured 
$500$ times, with $10$ sweeps between
consecutive measurements. $\alpha$ varies in a range of parameters between 
$\alpha = 0.15$ and $\alpha = 1.5$, in 
steps of $0.05$. We do not show values of $\alpha$ smaller than $0.15$, as another finite volume effect makes 
simulations unreliable there: 
often at very low $\alpha$, cycles occupy all available boxes down to the minimal box size of $8$, and thus 
look like they would be space filling. This gives an
erroneous sample value $d = 2.0$. The very slow decay of $\nu(K)$ (see \eqref{nu}) is not yet visible. Thus
the box counting dimension is overestimated by the numerics in this region. 

The most striking features about Figure \ref{fig8} are firstly 
that there is very little difference between 
the results for a square grid and a triangular grid; and secondly 
that there is a regime change around $\alpha = 0.7$. 
For $\alpha \leq 0.7$, $d(\alpha)$ is approximately linear. We fit 
a function of the form 
\be \label{linBCD}
d_{\rm lin}(\alpha) = d_0 - s \alpha
\ee
to the measured values of $d_{\rm Box}(\alpha)$, for all $\alpha \leq 0.65$ and both grids. 
In Figure \ref{fig8} this is shown as a dashed green line. We find $d_0 = 1.996$ and 
$s = 0.705$. Since $\alpha=0$ corresponds to uniform permutations, we expect $\lim_{\alpha \to 0} d(\alpha)=2$, 
and the numerical results confirm this nicely. It is tempting to conjecture that the slope $s$ has the value
of $7/10$. Our numerical accuracy is not high enough to justify significant confidence in such 
a quantitative prediction, but we feel comfortable to conjecture that 
$d(\alpha)$ is a linear function of $\alpha$ for small $\alpha$. 

For $\alpha \geq 0.8$, $d(\alpha)-1$ decays like a power law. Fitting a function of the form 
\be \label{powBCD}
d_{\rm pow}(x) = 1 + b x^{-c}
\ee
to the measured values of $d_{\rm Box}(\alpha)$ for $\alpha \ge 0.75$ 
gives $b \approx 0.184$ and $c \approx 2.806$. The corresponding function is shown in Figure 
\ref{fig8} as a solid black line. The quality of the fit is quite 
remarkable given that there are only two free 
parameters. Other choices of the minimal box size lead to 
slightly different exponents, and so the numerical
values of $b$ and $c$ should not be taken too seriously. However, extensive 
tests have shown that 
the form \eqref{powBCD} remains valid for other choices of the minimal box size. 

This does not rule out the possibility that the 
parameters in $\eqref{powBCD}$ 
depend on the system size $N$, and that in particular the 
parameter $b$ could converge to zero as 
$N \to \infty$. To see why this is possible to happen, consider the case of very
large $\alpha$. Then almost all of the total energy will come from the 
large periodic cycle that was forced
through the system, and jumps longer than the minimal lattice distance will be 
very rare. The model thus becomes very similar to the self-avoiding 
random walk connecting two points of a domain. 
The trace of the walk in this model is strongly believed \cite{DKY12} to 
converge to a straight line in the scaling limit when $\alpha$ large. It is 
therefore plausible that for large $\alpha$, the fractal dimension
of large cycles equals one in the scaling limit.
On the other hand, in \cite{DKY12} it is proved that for small $\beta$, 
the self-avoiding walk becomes space-filling; this is 
certainly not what happens for SRP cycles and shows that in general, the comparison of 
SRP cycles and paths of self-avoiding walks has to be treated with caution. 

\section{The scaling limit} \label{sle}

The results of the previous section lead to the next question: is there a scaling 
limit of long SRP cycles, and if so, what can we say about it? 
Let $C_x = \{\pi^j(x): j \in \bbZ\}$ be an infinitely long cycle 
of a SRP in infinite volume, obtained by suitable conditioning as described before.
Connecting $\pi^j(x)$ and $\pi^{j-1}(x)$ with a 
straight line for all $j \in \bbZ$ produces a piecewise straight random curve 
in $\bbR^2$, with possible corners in the points of $X$. Consider $(C_x-x)/M$ and let $M \to \infty$. 
Optimistically, we would expect the limit (in distribution) 
to exist just as it does in the construction of Brownian motion 
via Donskers theorem, and to be a measure on random curves through $x=0$.
To prove this is an entirely different matter, and we cannot offer any 
contributions to that question. 

If the limit does exist, the results from 
Section \ref{section fractal} suggest that it is a fractal. In particular,
the limiting measure is invariant under scaling. 
Moreover, assuming that the jump energy $\xi$ is symmetric under 
rotations (we will do this from now on), 
we can expect the limiting measure to be invariant under rotations, just as is the case 
for Brownian motion as a limit of the simple random walk. With a somewhat larger 
leap of faith we may claim that the limiting measure (if it exists) 
is conformally invariant, i.e. invariant under all conformal maps. This is 
of course much more than just rotation and scaling invariance, and we expect it 
to be very difficult to show. Supporting evidence is that, by our 
numerical results from Section \ref{section fractal}, the limit seems not to depend
on the chosen lattice. 

The question that immediately arises is whether long cycles of SRP might have some 
connection with curves of the Schramm-L\"owner evolution (SLE). After all, 
the latter are also conformally invariant, and arise as the 
scaling limit of discrete curves, 
such as the loop-erased random walk \cite{LSW04}, or the exploration path of critical
percolation on a triangular lattice \cite{Smi01}.
Since we feel that the only people likely to benefit from the remainder of the 
present section are those already acquainted with SLE, we will
not try to give any details on its theory. We refer to 
\cite{Law05}, the nice chapter by Vincent Beffara in \cite{ENSW12},
and also to \cite{Car05} for introductions to the topic.

Besides conformal invariance in the scaling limit, the second property that 
lattice models converging to SLE must have is the domain Markov property \cite{Schr00}. 
It can be described as follows: Let $\mu_{x,A}$ be a family of 
measures on self-avoiding discrete paths on the lattice $X$, where $A$ is a subset of 
$X$ and $x \in A$. Under $\mu_{x,A}$, the first step of the random path starts from 
$x$, and the path avoids itself and all points in $A$ with probability one. 
If $\mu_{A,x}$, conditional on the first $n$ steps $(x_1, \ldots, x_n)$ of the path, is equal to 
$\mu_{A \cup \{x_1, \ldots,x_n \}, x_n}$, then we say that the family $\mu_{A,x}$ has the domain 
Markov property. 

A variant of the SRP measure \eqref{measure} has this property: 
consider $A \subset X$ such that the complement of $A$ is 
finite, and let $x \in A$.  Write  
\[
\caS_{A,x} = \{ \pi: A^c \cup \{x\} \to X \setminus \{x\}: \pi \text{ is one-to-one}, 
\pi(A^c) \cap A = \{y\} \text{ for some } y \neq x \}
\]
for the set of maps that are permutations on $A^c$ except for one cycle that starts in 
$x$ and ends in some $y \neq x$ in $A$. For $\pi \in \caS_{A,x}$ define 
\be \label{open loop measure}
\bbP_{A,x}(\pi) = \frac{1}{Z_{A,x}} \e{-\alpha H(\pi)},
\ee
where $H$ is given by \eqref{energy}, it is understood that $\pi(z)=z$ for all 
$z \in A \setminus (\pi(A^c) \cup \{x\})$, and the sum in the normalization 
$Z_{A,x}$ goes over $\caS_{A,x}$. In other words, our measure is just like the ordinary
SRP measure except that there is an 'open' cycle going from $x$ to $y$. 
A variant of this measure has been introduced in \cite{Uel06} to make a 
connection between random permutation models and off-diagonal long range order 
for the Bose gas. 

For a path $(x=z_0,z_1,\ldots,z_n=y)$ from $x$ to $y \in A \setminus x$, and
$m \leq n$, we have 
\[
\sum_{\pi \in \caS_{A,x}} \e{-\alpha H(\pi)} 
1_{\{ \pi^j(x) = z_j \,\, \forall j \leq m \} }  = 
\prod_{j=0}^{m-1} \e{-\alpha \xi(\pi^{j+1}(x) - \pi^j(x))} 
\sum_{\pi \in \caS_{A \cup \{z_1, \ldots, z_m\},z_m}} \e{-\alpha H(\pi)},
\]
and thus 
\[
\bbP_{A,x} (\pi^j(x) = z_j \,\,  \forall j > m | \pi^j(x) = z_j \,\, \forall j \leq m) = 
\bbP_{A \cup \{z_1, \ldots, z_m\},z_m} (\pi^{j}(z_m) = z_{m+j} \, \forall j).
\]
This is the domain Markov property. When $x=0$ and $A$ is the union of $\{x\}$ and the 
exterior of a large ball, 
we are in a situation reminiscent of radial SLE; when $X$ is the intersection of a 
regular lattice with a half space, $x$ is on the boundary of that half space, 
and $A^c \nearrow X$, the model is close to 
the ones that produce chordal SLE. The original setting \eqref{measure} might correspond
to the conformal loop ensemble \cite{SW12}, at least for some values of $\alpha$. 

There is a major obstacle to any possibility of SRP cycles being 
equal to SLE curves in the scaling limit: the SLE curves, and the discrete 
curves of any known model leading to them in the scaling limit, are non-self-crossing,
although the former do have self-intersections if the SLE-parameter $\kappa$ is greater than $4$.  
On the other hand, long SRP cycles are always self-crossing, especially so for 
small values of $\alpha$. Of course, it is still possible that the scaling limit 
of an SRP curve describes an SLE-trace as a set. It is also possible that in the scaling 
limit, self-crossings become invisible; after all, jumps are of finite length, and areas 
where many lattice points are already used are difficult to cross for a cycle. This is 
especially plausible for large values of $\alpha$. Nevertheless, the fact that SRP
curves are self-crossing before taking the scaling limit casts some doubt over the 
claim that SRP cycles might be distributed like SLE curves.

One way to make the curves of SRP non-self-intersecting is to 
simply exclude configurations where the straight line between $x$ and $\pi(x)$
crosses the one between $y$ and $\pi(y)$ for some $x$ and $y$. This happens 
automatically if we only allow jumps between nearest neighbors in $X$, i.e.\ 
we put $\xi(x-y) = \infty$ in \eqref{energy} unless $x=y$, or $x,y$ are nearest
neighbors. If we put $\xi(0) = \infty$ as well 
then each permutation has precisely $N$ jumps. Assuming in addition that all 
nearest neighbors have equal distance, the only factor determining the weight of 
a permutation $\pi$ is the number of cycles of length $\geq 3$ that it has: 
each of these can occur either forwards of backwards, while for cycles of length
two this makes no difference. We end up with 
\[
\bbP_N(\pi) = \frac{1}{Z_N} 2^{k(\pi)}
\]
where $k(\pi)$ is the number of cycles longer than $2$ in $\pi$. This is precisely 
the measure on the loops of the double dimer model \cite{KW10}, which has 
recently been shown to be conformally invariant in the scaling limit 
\cite{Ken12}. The SLE parameter for the double dimer model is  
$\kappa = 4$ \cite{KW10, Ken12}.  

When we put $\xi(0) = 0$, we obtain a double dimer model with nonzero
'monomer activity' governed by the jump energy parameter $\alpha$.  
We are not aware of any investigation of such a model, but it seems plausible 
that it would still be conformally invariant, and that the SLE coefficient cannot
be greater than $4$. This fits well with the result \cite{SW12} that the 
random loop ensemble has $\kappa \in (3/8,4]$. ($\kappa \leq 3/8$ may still 
be possible in SRP by using \eqref{open loop measure} 
and thus forcing a long cycle.) 
It appears therefore that when preventing self-crossings in the lattice 
approximation, the limiting curves, if they turn out to be SLE, cannot have a 
parameter greater than $4$. 

SLE usually occurs at criticality. This would make 
the value $\alpha_c$ where the Kosterlitz-Thouless phase transition happens
our strongest candidate for convergence of SRP cycles to SLE curves. 
We have seen in Section \ref{section KT} that $\alpha_c \approx 0.64$. 
Beffara \cite{Bef08} showed that the fractal dimension
of an SLE curve with parameter $\kappa$ is given by $\min(2, 1 + \kappa/8)$.
In Section \ref{section fractal}, we measured the box counting dimension for 
$\alpha = 0.65$ to be $1.534$ for the square grid, and $1.537$ for the triangular
grid. The empirical standard deviations are $0.025$ and $0.019$, respectively. 
This would give $\kappa \approx 4.28$, with standard deviation of approximately 
$0.2$. As discussed in Section \ref{section fractal}, we cannot exclude a systematic
error in the quantitative measurement of the box counting dimension, and the same
goes for the Kosterlitz-Thouless transition point.  Thus it is entirely possible 
that at $\alpha_c$, the SLE parameter is $\kappa = 4$. This value is special
since it is the one below which SLE curves become non-self-intersecting.

On the other hand, SRP behave somewhat like systems at 
criticality for all  $\alpha$ 
in the sense that correlations $\mu(x,y) = \bbP(y \in C_x)$ decay algebraically. For small $\alpha$, this a feature 
shared by all models undergoing a Kosterlitz-Thouless transition. For large $\alpha$,
it is a simple consequence of the fact that we forced a cycle through the system. 
If we were inclined to link SLE behaviour to algebraic decay of correlations instead of 
strict criticality, we might therefore hope for long cycles of SRP to be SLE 
curves at all parameters $\alpha$. Then, using \eqref{linBCD} with $d_0 = 2$ and $s = 7/10$, 
this leads to $\kappa(\alpha) = 8 - \frac{28}{5} \alpha$ for small $\alpha$. 
For large $\alpha$, \eqref{powBCD} gives 
$\kappa(\alpha) \approx 1.5 \alpha^{-2.8}$, albeit with significant 
numerical uncertainty; it is also possible that in truth 
$\alpha \mapsto \kappa(\alpha)$ has a jump at some value of $\alpha$, 
and is equal one above that value; cf.\ the discussion at the end of 
Section \ref{section fractal},

Of course, all of this is relevant only if 
there actually exists a connection between 
(possibly modified) SRP cycles and SLE curves, 
either for all values of $\alpha$ 
or (more modestly) for some values of $\alpha$. 
Whether such a connection does exist is wide open.

\end{document}